\documentclass[12pt]{amsart}
\usepackage[utf8]{inputenc}
\usepackage[english]{babel}
\usepackage{amsmath,amssymb,amsfonts,amsthm, soul, xcolor}
\usepackage{graphicx}
\usepackage{tikz}

 \usepackage[draft]{fixme}
\newcounter{thm}
\newtheorem{theorem}[thm]{Theorem}
\newtheorem*{theorem*}{Theorem}
\newtheorem{lemma}[thm]{Lemma}

\newtheorem{proposition}[thm]{Proposition}
\newtheorem*{conjecture*}{Conjecture}

\theoremstyle{definition}
\newtheorem{definition}[thm]{Definition}

\renewcommand{\Im}{\mathop{\mathrm{Im}}}
\newcommand{\dist}{\operatorname{dist}}
\newcommand{\eps}{\varepsilon}
\newcommand{\bbC}{\mathbb C}
\newcommand{\CC}{\mathbb C}
\newcommand{\DD}{\mathbb D}
\newcommand{\NN}{\mathbb N}
\newcommand{\QQ}{\mathbb Q}
\newcommand{\RR}{\mathbb R}
\newcommand{\TT}{\mathbb T}
\newcommand{\ZZ}{\mathbb Z}
\newcommand{\bbR}{\mathbb R}
\newcommand{\bbQ}{\mathbb Q}

\newcommand{\bbN}{\mathbb N}

\newcommand{\cB}{\mathcal B\mathcal T}
\newcommand{\cD}{\mathcal D}
\newcommand{\cR}{\mathcal R}

\newcommand{\cW}{\mathcal W}

\newcommand{\cC}{\mathcal C}
\newcommand{\cAC}{\mathcal A\mathcal C}

\renewcommand{\dist}{\mathrm{dist}\,}

\renewcommand{\mod}{\operatorname{mod}}

\newcommand{\ignore}[1]{}

\numberwithin{equation}{section}
\numberwithin{thm}{section}
\author{Nataliya Goncharuk, Michael Yampolsky}
\title{Rotation domains for maps of bounded type}

\begin{document}
\begin{abstract}
We present a novel approach for deriving KAM-type linearization theorems directly -- and almost immediately -- from the existence of the stable foliation for a renormalization operator. We give a few illustrations in dynamics in one and several complex variables, starting with a version of the classical theorem of Arnol'd and ending with a result on persistence of Herman rings in families of two-dimensional maps.
  
  \end{abstract}

\maketitle

\section*{Foreword}
The purpose of this note is to describe a novel approach to KAM-type linearization theorems, such as Risler's Theorem \cite{Risler,GY} or the second author's result on the persistence of Herman rings in 2D \cite{KAM-yam}. The proofs here rely only on the existence of a stable foliation for a complexified renormalization operator. The connection of renormalization with linearization of rotational dynamics in this context is well-established (see, for instance, \cite{KhSin87,Yoccoz2002} and references therein). However, in those works, renormalization is used as a geometric tool for the study of dynamics. We consider it as a dynamical system in its own right (cf. our earlier work \cite{GY}), whose hyperbolic structure leads to the existence and regularity of analytic stable manifolds in the parameter space of dynamical systems. Our derivation of the existence of linearization is a direct and almost immediate consequence. We restrict our attention to rotation numbers of bounded type, to ensure uniformity of hyperbolicity of the action of renormalization. We have aimed to keep the presentation simple and brief to highlight the main ideas, which should find other applications in KAM-type setting. In view of this, we skip some of the technical details which have been covered in our previous works and reference those works instead whenever possible. 

\section{Introduction}

We denote $\{x\}$, $[x]$ the fractional and the integer parts of a real number $x$ respectively. We let $\TT$ denote the circle $\RR/\ZZ$, and for $\alpha\in\RR$ we let $R_\alpha$ be the rotation of the circle
$$R_\alpha(x)=x+\alpha/\ZZ.$$
We identify the points in $\TT$ with their representatives in $[0,1)$ where convenient. For $\eps>0$ we set
  $$\Pi_\eps=\{|\Im z|<\eps\}/\ZZ\subset \CC/\ZZ.$$
  
For $\alpha\in(0,1)$, let $$G(\alpha)=\left\{\frac{1}{\alpha} \right\}$$
denote the Gauss map. We set
$$\alpha_0\equiv\alpha,\ldots,\alpha_n=G(\alpha_{n-1}),\ldots;$$
this sequence is infinite if and only if $\alpha\notin \QQ$, otherwise, we will end it at the last non-zero term.
Note that numbers $a_n=[1/\alpha_{n}]$, $n\geq 0$ are the coefficients of a continued fraction expansion of $\alpha$ with positive terms, which is unique  if $\alpha\notin \bbQ$:
$$\alpha=\cfrac{1}{a_0+\cfrac{1}{a_1+\cfrac{1}{a_2+\cdots}}},$$
which, to save space, we abbreviate as 
$$\alpha=[a_0,a_1,a_2,\ldots].$$
As usual, $p_n/q_n$ will denote the $n$-th convergent of the continued fraction of $\alpha$:
$$\frac{p_n}{q_n}=[a_0,\ldots,a_{n-1}].$$
For $K\in\NN$, we say that $\alpha$ {\it is of a type bounded by }$K$ iff
$$\sup a_i\leq K,$$
and denote $\cB_K$ the set of all such irrationals in $(0,1)$. Classically, the union $$\cB=\cup_{K\in\NN}\cB_K$$ of all numbers of a bounded type coincides with  the set of Diophantine numbers with exponent $2$.

Furthermore, denote $\cD_\eps$ the affine Banach space of bounded analytic maps $\Pi_\eps\to\CC/\ZZ$ which continuously extend to the boundary with the sup-norm.

The goal of this note is to give  novel and ``simple'' proofs of analytic linearization theorems for rotation numbers of bounded type. Namely, we prove:

\begin{theorem}
  \label{th-main-1}
 Let $\eps>0$. For $K\in\NN$ there exists $\kappa=\kappa(K,\eps)>0$ such that the following holds.
Suppose $f\in\cD_\eps$ is an analytic diffeomorphism of the circle $\TT$ with rotation number $\alpha=\rho(f)\in\cB_K$, which is $\kappa$-close to $R_\alpha$ in $\cD_\eps$. Then $f$ is conformally conjugate to $R_\alpha$ in $\Pi_{\eps/2}$.
  \end{theorem}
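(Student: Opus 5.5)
The plan is to derive Theorem~\ref{th-main-1} from the hyperbolic structure of a renormalization operator acting on commuting pairs (or on cylinder renormalizations) near the rotations. Rather than a Newton-type iteration, we use the stable foliation of this operator.

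\emph{Renormalization setup.} Given $f$ close to $R_\alpha$, consider the pair $(f^{q_n}, f^{q_{n+1}})$, or equivalently the first return map to a fundamental domain, rescaled by an affine change of coordinate so that it becomes a map of the circle. Call this $\cR^n f$; its rotation number is $\alpha_n$. We complexify: define $\cR$ on a neighbourhood of the family of rotations $\{R_\beta\}$ in $\cD_\eps$, in suitable Banach spaces of pairs. For $\beta$ irrational, the rotation $R_\beta$ renormalizes to $R_{G(\beta)}$. The linearization of $\cR$ at rotations is explicit: a perturbation $R_\beta+v$ is mapped, up to coboundary terms $h\circ R_\beta-h$ coming from changes of coordinates, to a rescaled average of $v$ along the orbit. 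The rescaling by $1/\alpha_n$ together with the analytic extension to a wider strip (the first return uses only $q_n$ iterates, while the strip width grows by the factor $1/\alpha_n$) makes all non-trivial directions contracting.

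The only expanding direction is the rotation-number direction, and the neutral directions are the coordinate changes. Working modulo conjugacy, in a space of pairs modulo affine coordinate change, we obtain a hyperbolic invariant set $\{R_\beta:\beta\in\cB_K\}$ with one-dimensional unstable direction. Its constants are uniform in $K$, because $a_i\le K$ bounds the rescaling factors above and below. The stable manifold theorem, which the paper invokes from \cite{GY}, then gives a codimension-one analytic stable foliation near rotations. Its leaves are $W^s(R_\beta)=\{g:\cR^n g\to R_{\alpha_n}\ \text{exponentially}\}$, and they are exactly the level sets $\{\rho=\beta\}$ in a $\kappa$-neighbourhood.

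\emph{Applying it to $f$.} If $\rho(f)=\alpha\in\cB_K$ and $f$ is $\kappa$-close to $R_\alpha$, then $f$ lies on the leaf $W^s(R_\alpha)$. To see this, note that the rotation number is monotone along the unstable direction, so the leaf through $f$ must carry rotation number $\alpha$. Hence $\|\cR^n f-R_{\alpha_n}\|\le C\lambda^n\kappa$ on a strip of width bounded below.

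Undoing the renormalizations, this estimate says that $f^{q_n}$ is $C\lambda^n|q_n\alpha-p_n|$-close to a translation on a strip of width comparable to $\eps$ in the original coordinate. The $q_n$ grow exponentially, so $\sum_n \lambda^n$ controls the distortion. From this we build the conjugacy. Set $h=\lim h_n$, where $h_n$ is defined on the orbit segment $\{f^k(0)\}$ by $h_n(f^k(0))=k\alpha$ mod 1 for $|k|<q_n$, and is extended analytically using the near-affine return maps. Alternatively, take the limit of the renormalization coordinate changes, whose composition converges because each step is $O(\lambda^n)$-close to affine. The limit $h$ is univalent on $\Pi_{\eps/2}$ and satisfies $h\circ f=R_\alpha\circ h$ there. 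The loss from $\eps$ to $\eps/2$ is absorbed by choosing $\kappa$ small relative to the sum of the geometric series.

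\emph{Main obstacle.} The delicate step is the uniform hyperbolicity of $\cR$ at rotations of bounded type on a complex neighbourhood, in particular the control of the domain of definition. The iterates $f^{q_n}$ must remain defined on a strip whose width does not shrink, which requires a priori bounds uniform in $n$, and the coboundary (neutral) directions must be quotiented out correctly. I would handle both at once by working with cylinder renormalization in the sense of \cite{GY}. There a fundamental crescent of $f^{q_n}$ is glued into a new cylinder by a holomorphic gluing map. This makes the operator analytic and compact, with contraction given by the strict inclusion of the domains.
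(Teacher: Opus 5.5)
\textbf{There is a genuine gap: the proposal never obtains a conjugacy on a complex strip of definite width.} Your first step, placing $f$ on $W_\alpha$, is essentially the paper's Lemma~\ref{lem-conj}. It should be phrased via the transversal line $\{R_t\circ f\}$. That line meets $W_\alpha$ in exactly one point, which is real by symmetry and hence has rotation number $\alpha$; strict monotonicity of $t\mapsto\rho(R_t\circ f)$ then forces $t=0$. Speaking of ``the leaf through $f$'' presupposes the conclusion, since the leaves form only a Cantor family.

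The gap is the passage from $f\in W_\alpha$ to a conjugacy on $\Pi_{\eps/2}$.
\begin{itemize}
\item The estimate $\|\cR^k f-R_{\alpha_k}\|\le C\lambda^k\kappa$ holds on $\Pi_\eps$ in the $k$-th renormalized coordinate. That coordinate only sees a fundamental domain near $0$ of horizontal size $l_k\approx|q_k\alpha-p_k|$ and height $O(\eps l_k)$ (recall $I=[-i\eps' l,i\eps' l]$), together with its $f$-images.
\item So undoing the renormalizations controls $f^{q_k}$ only in a neighbourhood of $\TT$ of height $O(\eps l_k)\to 0$, not on a strip of width comparable to $\eps$. Your own remark that the strip widens by $1/\alpha_k$ at each renormalization says exactly this.
\item Hence the limit of your $h_n$, or of the composed coordinate changes, is obtained only on $\TT$ itself. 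That is a real-variable, Herman/Khanin--Sinai type conclusion. Nothing in the argument gives an analytic extension, let alone univalence on $\Pi_{\eps/2}$.
\item The ``a priori bound'' you name as the main obstacle is that $f^{q_n}$ is defined and close to a translation on a strip of non-shrinking width. That is essentially equivalent to the theorem, and cylinder renormalization does not supply it.
\end{itemize}

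The missing idea is the paper's translation trick. It converts the stable-manifold property at the single base point $0$ into recurrence of every point of a definite complex strip.
\begin{itemize}
\item Set $f_w(z)=f(z-w)+w$. For real $w$, $f_w$ is a real circle diffeomorphism with rotation number $\alpha$, still $\kappa$-close to $R_\alpha$, so $f_w\in W_\alpha$.
\item Since $w\mapsto f_w$ is holomorphic and $W_\alpha$ is a complex-analytic hypersurface, $f_w\in W_\alpha$ for $w$ in some strip $\Pi_\tau$.
\item In fact this holds for all $w\in\Pi_{\eps/2}$. Because $\partial_w f_w=1-f'(z-w)$ is small when $f$ is $\kappa$-close to $R_\alpha$, the family cannot leave $W_\alpha$ through its relative boundary.
\item Membership $f_w\in W_\alpha$ means the $f_w$-orbit of $0$ is infinite, stays in the strip and accumulates on $0$. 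That orbit is the $f$-orbit of $-w$ translated by $w$.
\item Hence $f$ is a conformal automorphism without periodic points of the non-simply-connected domain $\bigcup_{n\ge 0}f^n(\Pi_\tau)$. That domain must therefore be an annulus on which $f$ is conjugate to $R_\alpha$. Repeating this with $\Pi_{\eps/2}$ in place of $\Pi_\tau$ gives the statement.
\end{itemize}
Without some such device that extracts complex-strip information from $W_\alpha$, your outline proves at most linearizability on the circle.
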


\begin{theorem}
  \label{th-main-2}
  Let $\alpha\in \cB_K$ and $\eps>0$. Then there exists $\kappa=\kappa(K,\eps)>0$ such that the following holds.
The set of maps $f\in \mathcal D_{\eps}$ such that $f$ is analytically conjugate to $R_\alpha$ in  $\Pi_{\eps/2}$ and whose distance  to $R_\alpha$ is bounded by $\kappa$ forms an embedded analytic submanifold of $\mathcal D_\eps$ at $R_\alpha$ of codimension 1.
\end{theorem}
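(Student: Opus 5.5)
The plan is to deduce Theorem~\ref{th-main-2} from the hyperbolicity of a complexified renormalization operator at its fixed points, the rigid rotations. The approach follows the Foreword: renormalization is treated as a dynamical system on a space of analytic commuting pairs.

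\emph{Setup.} Let $\cR$ be the cylinder renormalization operator acting on analytic commuting pairs, or on maps in a space of the type $\cD_\eps$. The first step is to record its action near rotations. It sends $R_\alpha$ to $R_{G(\alpha)}$, and the set $\{R_\beta:\beta\in\cB_K\}$ is a compact invariant horseshoe on which $\cR$ acts as the Gauss map. Near each $R_\beta$, the linearization $D\cR$ has two kinds of directions.
\begin{itemize}
\item It has a one-dimensional unstable direction, the rotation direction $\partial_\beta R_\beta$, which is expanded by roughly $|G'(\beta)|\ge 1$. The iterates of $G$ expand uniformly over two steps.
\item On the complementary subspace, $\cR$ is a compact operator that strictly contracts. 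This holds uniformly in $\beta\in\cB_K$, because the renormalization rescales by factors $\asymp\alpha_n$ bounded below by $1/(K+1)$, and it restricts to a smaller strip and then re-extends. This gives an improvement of the analytic norm.
\end{itemize}

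\emph{Stable manifold.} I will construct the stable manifold by a standard Hadamard--Perron or graph transform argument along the orbit $\alpha_n$. This is a nonautonomous sequence of uniformly hyperbolic maps between Banach spaces with uniform constants thanks to bounded type. The output is that $W^s(R_\alpha)=\{f: \cR^n f\to R_{\alpha_n} \text{ exponentially}\}$, intersected with a $\kappa$-neighborhood of $R_\alpha$, is an embedded analytic codimension-one submanifold of $\cD_\eps$. Analyticity follows because $\cR$ is complex-analytic and the graph transform preserves analytic graphs with uniform bounds.

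\emph{Identification.} It remains to show that $W^s(R_\alpha)$ coincides with the set of $f$ that are $\kappa$-close to $R_\alpha$ and analytically conjugate to $R_\alpha$ on $\Pi_{\eps/2}$. I will prove the two inclusions separately.
\begin{itemize}
\item Suppose $f=\phi\circ R_\alpha\circ\phi^{-1}$ with $\phi$ close to the identity. Renormalization of a conjugated rotation is a rotation conjugated by a rescaled version of $\phi$. The rescaled conjugacies $\phi_n$ converge to translations, since the rescaling blows up a fixed strip and the derivatives decay. Hence $\cR^nf\to R_{\alpha_n}$ and $f\in W^s$.
\item Conversely, take $f\in W^s$. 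The renormalizations $\cR^n f$ are exponentially close to rotations, so the corresponding renormalization coordinates (the fundamental-domain gluing maps) nearly agree with those of rotations. Composing these coordinate changes gives a sequence of maps $\Phi_n$ that converges geometrically, because the errors decay exponentially while the distortion accumulates only through the bounded scalings. The limit conjugates $f$ to $R_\alpha$ on a definite strip, and $\Pi_{\eps/2}$ is reached by choosing $\kappa$ small.
\end{itemize}

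I expect the main obstacle to be this converse direction with quantitative control of the conjugacy domain. The key estimate to obtain is that the exponential convergence on $W^s$ beats the product of the uniformly bounded rescaling factors.

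The closeness hypothesis in the first direction, and the requirement that $\rho(f)=\alpha$ on $W^s$, come together as a consistency check. Rotation number is continuous and commutes with $\cR$ via $G$, and being asymptotic to the orbit of $R_\alpha$ pins down $\rho=\alpha$. This also yields Theorem~\ref{th-main-1} as a byproduct, since $\rho(f)=\alpha$ already places $f$ in $W^s$ (the unstable coordinate is determined by $\rho$).
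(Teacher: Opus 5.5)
\textbf{There is a genuine gap in the converse inclusion, which is the real content of the theorem.} Your first inclusion (analytic conjugacy to $R_\alpha$ implies $f\in W^s(R_\alpha)$) is Proposition~\ref{prop-conj-class}. Taking the codimension-one analytic stable manifold from hyperbolicity of $\cR$ is also what the paper does. So everything rests on showing that every $f\in W_\alpha$ is conformally conjugate to $R_\alpha$ on $\Pi_{\eps/2}$, and your sketch does not do this.

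The coordinate changes you want to compose, $\Psi_{\cR^{k-1}f}\circ\cdots\circ\Psi_f$ (normalized by their linear counterparts for $R_\alpha$), are defined only on the fundamental domains $V_k\ni 0$. Their size $l_k$ shrinks exponentially. Exponential convergence of $\cR^k f$ to rotations therefore controls these charts only on shrinking neighborhoods of $0$. It tells you that $f$ is asymptotically affine at small scales near $0$. It does not produce a map on a strip of definite width, let alone on $\Pi_{\eps/2}$.

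Moreover, $f$ is not assumed real-symmetric. So there is no invariant circle along which the local information at $0$ could be spread by the dynamics; such a curve must first be constructed. The obstacle is therefore not a summability estimate of the kind you name. What is missing is a mechanism that turns small-scale control at one point into a conjugacy on an annulus.

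The paper supplies this mechanism with a translation trick. Put $f_w(z)=f(z-w)+w$. This map is conjugate to $f$ by $z\mapsto z+w$ and depends analytically on $w$. The argument then runs as follows.
\begin{enumerate}
\item One builds an invariant topological circle $\Gamma_f\ni 0$ from the nested dynamical partitions of the cylinder renormalizations (Proposition~\ref{prop-curve-2}).
\item One checks that $f_w\in W_\alpha$ for $w\in\Gamma_f$ near $0$. The cylinder renormalizations of $f_w$ are renormalizations of the nearly affine rescaled pairs $(f^{q_k},f^{q_{k+1}})$ with a different base point.
\item Since $W_\alpha$ has codimension one, $\{w: f_w\in W_\alpha\}$ is locally the zero set of a single holomorphic function of $w$. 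It contains a continuum, so it contains an open set.
\item Hence every point of an open annulus has a bounded orbit accumulating on itself. So $f$ is a conformal automorphism, without periodic points, of the annulus swept out by these orbits, and is therefore conjugate to a rotation there.
\item The conjugacy reaches $\Pi_{\eps/2}$ because $\partial_w f_w=1-f'(z-w)$ is small. So the family cannot leave $W_\alpha$ through its relative boundary.
\end{enumerate}
The paper also gives an alternative argument. The orbit of $0$ moves holomorphically over $W_\alpha$, so by the $\Lambda$-lemma \cite{BR} its closure is a quasicircle. Uniformizing the two complementary cylinders reduces the problem to Theorem~\ref{th-main-1}, and the two conjugacies glue by removability.

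\textbf{Two smaller points.}
\begin{itemize}
\item Your reason for contraction transverse to the rotation direction is not valid. Bounded rescalings and the gain of domain give compactness, not contraction. A priori $D\cR v$ involves a Birkhoff sum of $v$ of length about $q_n$ divided by the small scale $l$, which is large. The needed cancellation comes from solving the cohomological equation for zero-mean $v$ (Propositions~\ref{prop-conj} and~\ref{prop-conj-2}). You can simply cite \cite{GY} for this, as the paper does.
\item $\rho(f)$ is undefined for non-real $f\in\cD_\eps$. Your consistency check therefore applies only to real-symmetric maps.
\end{itemize}
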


Finally, let $$\Delta_{\eps}\equiv \Pi_\eps\times \DD_\eps(0)\subset\CC/\ZZ\times\CC$$
and denote $\cW_{\eps}$ the Banach space of analytic maps $\Delta_{\eps}\to\CC/\ZZ\times\CC$ which are continuous up to the boundary, with the sup-norm. 
\begin{theorem}
  \label{th-main-3}
  Let $\alpha\in \cB_K$ and $\eps>0$.
Consider a generic family\footnote{This theorem is stated differently from the previous two, since renormalization of two-dimensional maps is defined not in terms of maps but of almost commuting pairs (see \S~\ref{sec:2d} for the definitions). Genericity of the family  is understood in the sense of the pairs produced by the maps: the corresponding family of pairs should be transversal to the stable manifold of the renormalization operator.} of maps $F_\lambda\in \cW_\eps$ with analytic dependence on $\lambda\in\DD^n$ such that $$F_0(z,w)=(R_\alpha(z),0).$$
  Then there exists an $(n-1)$-dimensional submanifold $W\subset \DD^n$ with $0\in S$ such that for all
  $\lambda\in W$, the map $F_\lambda$ has a biholomorphically embedded  complex 1-dimensional annulus $A_{F_\lambda}$ on which it is analytically conjugated to $R_\alpha$ (a Herman ring).
\end{theorem}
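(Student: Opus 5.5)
We would prove Theorem~\ref{th-main-3} by running the same renormalization scheme as for the one-dimensional results, now on the space of almost commuting pairs produced by two-dimensional maps (\S~\ref{sec:2d}). The first step is to attach to each $F$ near $F_0$ a pair $\zeta_F=(\eta_F,\xi_F)$. Here $\eta_F$ is the lift of $F$ to the universal cover $\CC\times\CC$ of $\CC/\ZZ\times\CC$, and $\xi_F$ is the deck translation $(z,w)\mapsto(z-1,w)$. For $F_0$ this gives the pair $\zeta_0=((z,w)\mapsto(z+\alpha,0),\,(z,w)\mapsto(z-1,w))$. The correspondence $\lambda\mapsto\zeta_{F_\lambda}$ is analytic. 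We then define the renormalization operator $\cR$ on pairs, using the first-return scheme with the Gauss map. With an affine rescaling in $z$ and the natural rescaling in $w$, $\zeta_0$ is mapped to the rotation pair for $G(\alpha)$.

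Since $\alpha\in\cB_K$, the orbit $\{G^n(\alpha)\}$ ranges in a compact subset of $\cB_K$. So we treat $\cR$ as a skew product over the Gauss shift and use the uniform hyperbolicity of its action at the rotation pairs. We assume this hyperbolicity as established in \S~\ref{sec:2d}. Its structure is a single unstable direction, corresponding to changing the rotation number, while everything else is contracted. The $w$-direction is strongly contracted, because $F_0$ collapses it to $\{w=0\}$ and the iterates of the renormalized pairs inherit this collapse.

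This yields an analytic codimension-one stable manifold $W^s_\alpha$ at $\zeta_0$. By the genericity hypothesis, the family $\lambda\mapsto\zeta_{F_\lambda}$ is transversal to $W^s_\alpha$. The implicit function theorem then makes the preimage $W=\{\lambda:\zeta_{F_\lambda}\in W^s_\alpha\}$ an analytic $(n-1)$-dimensional submanifold of $\DD^n$ through $0$, shrinking $\DD^n$ if needed.

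The core step is to show that for $\lambda\in W$ the map $F_\lambda$ has a Herman ring. Since $\zeta=\zeta_{F_\lambda}\in W^s_\alpha$, we have $\cR^n\zeta\to$ rotation pairs exponentially fast, uniformly in $n$ by the bounded type. Unwinding the renormalizations, the rescalings $\Lambda_n$ compose into changes of coordinates $\Phi_n$. These conjugate $\eta_\zeta^{q_n}$ and $\eta_\zeta^{q_{n+1}}$ (composed with the corresponding translations) to maps that are $C\theta^n$-close to the translations by $p_n/q_n$-type numbers.

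We would then build the conjugacy as a limit. First take the invariant curves of the renormalized pairs: by the contraction of the transverse direction, the graph transform gives an invariant graph $w=\phi_n(z)$ for $\cR^n\zeta$. Pull these back to obtain a holomorphic curve $\{w=\phi(z)\}$, with $z\in\Pi_{\eps'}$, that is invariant under $F_\lambda$. Alternatively, one can directly define $h=\lim \Phi_n^{-1}\circ(\text{linear map})$, with the exponential convergence making the limit analytic.

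Restricted to this curve, $F_\lambda$ becomes a one-dimensional analytic map $f$ of a strip whose renormalizations converge to rotations. From here the argument of Theorem~\ref{th-main-1} applies: the 1D renormalizations converging exponentially to $R_{\alpha_n}$ produce a conformal conjugacy $h$ of $f$ to $R_\alpha$ on a thinner strip. The embedded annulus is then $A_{F_\lambda}=\{(h^{-1}(z),\phi(h^{-1}(z)))\}$.

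The main obstacle will be the invariant curve. We must show that the graph transforms converge on a domain of definite width uniformly along the renormalization orbit. The difficulty is that the rescaling in $z$ expands by about $1/\alpha_n$ while the $w$-contraction must beat it. We would handle this first by choosing the $w$-rescaling in the definition of $\cR$ so that the cone field $\{|dw|\le c|dz|\}$ is preserved with uniform contraction, using the closeness to $F_0$, whose derivative kills $dw$.
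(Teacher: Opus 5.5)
Your setup is fine: transversality plus the implicit function theorem gives the $(n-1)$-dimensional $W\subset\DD^n$, using the codimension-one stable manifold from \S\ref{sec:2d}. The gap is the core step, the holomorphic $F_\lambda$-invariant graph $\{w=\phi(z)\}$ over $\Pi_{\eps'}$.

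\textbf{Why the invariant graph is the whole problem.} A holomorphic invariant curve over an invariant strip, on which $F_\lambda$ acts as a univalent map close to a rotation, \emph{is} the Herman ring. So this step carries essentially the whole content of the theorem, and the mechanisms you offer do not produce it.
\begin{itemize}
\item \emph{The graph transform loses domain.} Write $F=(F_1,F_2)$ and $g_\phi(z)=F_1(z,\phi(z))$. The invariance equation is $\phi=T\phi$ with $T\phi(z)=F_2(g_\phi^{-1}(z),\phi(g_\phi^{-1}(z)))$. This $T$ is indeed a strong contraction in $\phi$. However, $T\phi$ is only defined on $g_\phi$ of the domain of $\phi$, which loses a collar of width comparable to $\|F-F_0\|$. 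After $k$ steps the domain is essentially the $k$-th forward image of the strip under the restricted map. It keeps definite width for all $k$ essentially only if that map already has an invariant strip, i.e.\ is linearizable. This domain loss is where the small divisors sit. A cone field $\{|dw|\le c|dz|\}$ controls slopes, not domains.
\item \emph{Renormalization does not rescue it.} Convergence of $\cR^k\zeta$ controls $F^{q_k},F^{q_{k+1}}$ only on pieces of size $\asymp|F^{q_k}(0)|$ near the origin. The level-$k$ graphs, pulled back and spread by iterates of $F$, therefore live over neighborhoods of the invariant circle whose width tends to $0$. They are only locally invariant, hence not unique off that circle, and you give no reason why graphs from different levels glue.
\item \emph{The alternative limit has the same defect.} $h=\lim\Phi_k^{-1}\circ(\text{linear map})$ is only a chart at the origin at shrinking scales.
\end{itemize}
A smaller point: the restricted map is not real. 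The argument of Theorem~\ref{th-main-1} relies on Lemma~\ref{lem-conj} for real $f$ with $\rho(f)=\alpha$, so it does not apply. You would need the argument of Theorem~\ref{th-main-2}, plus a proof that the restriction lies in the one-dimensional $W_\alpha$.

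\textbf{The missing idea.} Run the translation trick directly in $\CC/\ZZ\times\CC$; this makes any invariant-curve construction unnecessary. The paper proceeds as follows.
\begin{itemize}
\item As in Proposition~\ref{prop-curve-2}, it obtains an invariant topological circle $\Gamma_F\ni(0,0)$ from the dynamical partitions built out of the renormalization fundamental domains, with $F|_{\Gamma_F}$ conjugate to $R_\alpha$.
\item It considers the two-parameter family $F_{z,w}=F(\cdot-z,\cdot-w)+(z,w)$. As in Version II of the proof of Theorem~\ref{th-main-2}, it shows $F_{z,w}\in W^s$ for $(z,w)\in\Gamma_F$ near $0$: the renormalizations of $F_{z,w}$ are those of the nearly affine pairs $\cR^k\zeta$ with a shifted base point.
\item Since $W^s$ has codimension one, the set $\{(z,w):F_{z,w}\in W^s\}$ is locally the zero set of a single holomorphic function of two variables. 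It contains an arc of $\Gamma_F$, so it contains an embedded holomorphic disk $D$.
\item Every point of $D$ has a rotation-like orbit, and $A=\bigcup_jF^j(D)$ is foliated by invariant circles. Dissipativity makes $A$ two-real-dimensional, and a normal-families argument shows it is a Herman ring.
\end{itemize}
The holomorphic curve thus comes for free from the analytic structure of $W^s$, rather than from a graph transform.
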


Let us comment on the statements first. Theorem~\ref{th-main-1} is, of course, just a particular case of the very classical theorem of Arnol'd \cite{Arnold61}, which posited the corresponding statement for all Diophantine rotation numbers. However,  Theorem~\ref{th-main-2} is already highly non-trivial. It is a particular case of a version of Risler's theorem \cite{Risler} proven by the authors in \cite{GY}. The proof in \cite{GY} is for all Brjuno rotation numbers -- but it relies on Yoccoz's famous result \cite{Yoccoz2002} which extends Arnol'd's theorem to the Brjuno class of rotation numbers. Theorem~\ref{th-main-3} is the main result of a recent paper \cite{KAM-yam} of the second author.

Our proofs of all three theorems are based on the same scheme. Firstly, we construct an analytic renormalization operator which acts in the corresponding Banach space and has $$\Lambda_K\equiv \{R_\alpha\;|\;\alpha\in\cB_K\}$$ as a uniformly  hyperbolic compact invariant set with one unstable direction. We then use a simple trick to show that a map lying in the local stable manifold of $R_\alpha$ is analytically conjugate to $R_\alpha$ in an annulus. The latter part of the proof applies universally whenever it can be shown that $R_\alpha$ has an analytic stable submanifold of codimension $1$. Of course, in our work \cite{GY}, we have established this fact for all $\alpha$ in the Brjuno class -- but that proof requires  Yoccoz's theorem as an extra ingredient. This one does not.

For the remainder of the paper, let us fix $K\in\NN$. Note that $\Lambda_K$ is a Cantor set, since $\cB_K$ is invariant under the action of finitely many branches of the expanding Gauss map $G(x)=\{1/x\}$.
We begin with the one-dimensional case, Theorems~\ref{th-main-1} and \ref{th-main-2}.

\section{Renormalization of maps in $\cD_\eps$, following \cite{GY}}

\subsection{Definition of $\cR$}
Fix  $\eps>0$.
Recall that in \cite{GY}, maps close to a rotation $R_\alpha$ are renormalized using the following {\it cylinder renormalization} procedure.

Suppose that $f\in \mathcal D_\eps$ is close to a  rotation $R_\alpha$, $\alpha\in \cB_K$ (in particular, $f$ is univalent).
Consider a vertical segment $I \subset \Pi_{\eps}$ that contains zero. For an integer $n$,  let $V$ be the curvilinear rectangle bounded by $I$, $f^{q_n}(I)$, and two straight segments joining their endpoints. If  $f\in \mathcal D_{\eps}$ is sufficiently close to a rotation, these four curves are simple and bound a domain in $\Pi_{\eps}$. We glue neighborhoods of $I$ and $f^{q_n}(I)$ by $f$ and obtain a Riemann surface whose topological type is that of a cylinder. Then uniformize it using a bi-antiholomorphic map $\Psi_f$. This is done in such a way that $0\mapsto 0$,  the dependence $f\mapsto\Psi_f$ is analytic, and $\Psi_f$ is real-symmetric for a real-symmetric $f$.

The first-return map to $V$ projects to a locally conformal map $g\equiv \cR _n f$ on a neighborhood 
of $\TT$.
If $f=R_\alpha,$ then $\Psi_f$ is the antilinear rescaling by $\{n\alpha\}$ and $\cR_n f=R_{G(\alpha)}.$
Note that $\cR_n f$ depends on $f$  locally analytically. Indeed, in a small neighborhood of $R_\alpha\in\Lambda_K$ it is the rescaling of the iterate $f^{q_n}$ by $\Psi_f$. 
Finally, 
if $f$ is close to a rotation, then $I$ can be chosen close to the vertical segment $[-i\eps,i\eps]$ spanning the whole height of the cylinder. To have a better control on distortion of rescaling $\Psi_f$, we  select a smaller $I$: namely, we take $\eps'>\eps$, let $l=|f^{q_n}(0)|$ be the length of the fundamental interval, and choose $I = [-i\eps' l, i\eps' l]$. The length $l$ can be made arbitrarily small by increasing $n$, so we can guarantee $\eps'l<\eps$.

By compactness of $\Lambda_K$, we can choose $\delta>0$, and an even value of $n\in\NN$ such that if $f$ is $\delta$-close to $\Lambda_K$, then  $\cR_n f$ is defined in $ \Pi_{\eps'}$ with $\eps'>\eps$.
We fix this choice and obtain \cite{GY}:

\begin{theorem}
  \label{th-GY-1}
With the above choices, $\cR_n$ is a compact, analytic, real-symmetric operator from the $\delta$-neighborhood of $\Lambda_K$ in $\cD_\eps$ to $\cD_\eps$. The set $\Lambda_K$ is invariant.
  \end{theorem}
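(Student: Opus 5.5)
The plan is to check four things in turn: that $\cR_n$ is well defined and analytic near each $R_\alpha$, that it is compact, that it is real-symmetric, and that $\Lambda_K$ is invariant. The last two are quick. For invariance, when $f=R_\alpha$ the glued cylinder is the quotient of the strip between $I$ and $I+\{q_n\alpha\}$ by the translation. Hence $\Psi_f$ is the antilinear rescaling by $\{q_n\alpha\}=|q_n\alpha-p_n|$. The first-return map is conjugated to a rotation by the ratio $|q_{n+1}\alpha-p_{n+1}|/|q_n\alpha-p_n|$; with $n$ even and the antiholomorphic orientation convention, this ratio equals $G^{n}(\alpha)$ up to the standard index shift. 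Since $\cB_K$ is invariant under $G$, this gives $\cR_n(\Lambda_K)\subset\Lambda_K$. Real symmetry follows because, for real-symmetric $f$, the segment $I$, the rectangle $V$ and the uniformization $\Psi_f$ can all be chosen symmetric under $z\mapsto\bar z$, and the normalization $0\mapsto0$ fixes $\Psi_f$ uniquely.

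For analyticity, I would fix $R_{\alpha_0}\in\Lambda_K$ and work in a small ball $U$ around it.
\begin{itemize}
\item For $f\in U$, the curve $f^{q_n}(I)$ moves holomorphically with $f$.
\item I would build a quasiconformal model: take the straight rectangle for $R_{\alpha_0}$, together with a smooth interpolation between the identity near $I$ and $f^{q_n}$ near $f^{q_n}(I)$. This gives a glued cylinder whose Beltrami coefficient depends holomorphically on $f$.
\item The measurable Riemann mapping theorem, with holomorphic dependence on parameters, then makes the uniformizing map $\Psi_f$, normalized at $0$, depend analytically on $f$.
\end{itemize}
In a neighbourhood of the fundamental domain, $\cR_nf$ is given by $\Psi_f\circ f^{q_{n+1}}\circ\Psi_f^{-1}$, or by the appropriate first-return combination of $f^{q_n}$ and $f^{q_{n+1}}$. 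This is a composition of maps that are analytic in $f$, so $\cR_n$ is analytic from $U$ into bounded analytic maps on the image strip. I would then cover the compact set $\Lambda_K$ by finitely many such balls and check that the local definitions agree on overlaps. They do, since $I$ is fixed and the normalization pins $\Psi_f$ down. This produces $\delta$.

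The main obstacle is compactness together with the gain in domain, namely that $\cR_nf$ extends to $\Pi_{\eps'}$ with $\eps'>\eps$. The route I would try is as follows.
\begin{itemize}
\item Choose $I$ of height $2\eps' l$, where $l\approx\{q_n\alpha\}$. The rescaled fundamental domain then has height roughly $\eps'$ after dividing by $l$.
\item For this to be legitimate, the iterates $f^{j}$ with $j\le q_{n+1}$ must stay defined on a neighbourhood of $V$. Near $R_\alpha$ the maps are univalent and close to translations, and $V$ has height only $\eps'l\ll\eps$, so the orbit of $V$ stays inside $\Pi_\eps$ provided $\delta$ is small relative to $n$. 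Here $n$ is chosen first, large enough that $\eps' l<\eps$ uniformly over $\cB_K$, which bounded type allows because $l\le \mathrm{const}\cdot\lambda^n$ uniformly.
\item The Koebe distortion theorem controls $\Psi_f$, so that $\cR_nf$ is defined and bounded on $\Pi_{\eps''}$ for some $\eps''\in(\eps,\eps')$.
\end{itemize}
Compactness then comes from restriction. The operator factors as an analytic map into bounded holomorphic functions on $\Pi_{\eps''}$, followed by restriction to $\Pi_\eps$. By Montel's theorem this restriction is a compact operator, so $\cR_n$ is compact.
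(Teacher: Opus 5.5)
Your proposal is correct and follows essentially the same route as the paper, which defers the details to \cite{GY} and records exactly your ingredients. These are: analytic dependence of the uniformization $\Psi_f$ on $f$, which you justify by the standard measurable Riemann mapping argument with parameters; the choice $I=[-i\eps' l,i\eps' l]$, with $n$ fixed first and then $\delta$ by compactness of $\Lambda_K$, giving a gain of domain and hence compactness; and the computation that the renormalization of $R_\alpha$ is a rotation by a Gauss-map image of $\alpha$. One small correction: the local definitions agree on overlapping balls not because $I$ is globally fixed, but because $\delta$-close points of $\cB_K$ share their first $n+1$ partial quotients, so $q_n,q_{n+1}$ agree, with $I$ chosen the same on each such cluster.
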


\subsection{Hyperbolicity of renormalization}
We further have \cite{GY}:

\begin{theorem}
  \label{th-GY-2}
For sufficiently large $n=n(K,\eps)$, the invariant compact  set $\Lambda_K$ is uniformly hyperbolic for $\cR_n$ with 
one-dimensional unstable direction.
  \end{theorem}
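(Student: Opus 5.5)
We plan to compute the derivative of $\cR_n$ at points of $\Lambda_K$ explicitly and read off its spectral structure, using compactness from Theorem~\ref{th-GY-1} to pass from a pointwise statement to uniform hyperbolicity. At $R_\alpha$, a perturbation $R_\alpha+h$ with $h$ small in $\cD_\eps$ is renormalized by rescaling $f^{q_n}$ through $\Psi_f$. Differentiating, we get a linear map
$$D\cR_n|_{R_\alpha}\colon h\mapsto \frac{1}{\{q_n\alpha\}}\Big(\sum_{j=0}^{q_n-1} h\circ R_{j\alpha}\Big)\circ (\text{antilinear rescaling}) + (\text{correction from } D\Psi_f\cdot h),$$
where the correction is a coboundary-type term $\phi\circ R_{G(\alpha)}-\phi$ coming from the first-order variation of the uniformizing coordinate, up to a shift. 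The first step is to establish this formula, using the analytic dependence $f\mapsto\Psi_f$ from Theorem~\ref{th-GY-1}.

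Next we analyze the operator Fourier mode by mode. Decompose $h=\sum_k \hat h_k e^{2\pi i k z}$. The constant mode $\hat h_0$ shifts the rotation number, and under the cocycle $\sum_j$ it is multiplied by roughly $q_n/\{q_n\alpha\}\ge q_n^2$-ish, giving the one expanding direction, with eigenvalue of size $\sim\prod_{i<n}\alpha_i^{-1}$. This is bounded away from $1$ uniformly on $\cB_K$. The nonconstant modes are exactly the coboundaries $\phi\circ R_\alpha-\phi$ for $h$ in a slightly smaller strip; this is solvable for bounded type with a loss of width. Modulo the induced conjugacy change, they are transported to coboundaries for $R_{G(\alpha)}$. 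Since the rescaling by $\{q_n\alpha\}$ is a strong contraction, and the strip $\Pi_{\eps'}$ with $\eps'>\eps$ maps onto $\Pi_\eps$, the sup-norm of the restriction decays like $e^{-2\pi|k|(\eps'-\eps)/\{q_n\alpha\}}$. Equivalently, a function bounded on a wider strip, restricted and rescaled, has small norm for large $n$.

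To make this rigorous without Fourier bookkeeping on the nonlinear correction, we plan to split $T_\alpha\cD_\eps=E^u_\alpha\oplus E^s_\alpha$. Here $E^u_\alpha$ is the constants, and $E^s_\alpha$ is the closed subspace of $h$ with $\int_\TT h\circ H=0$ for an appropriate normalization. The invariance $D\cR_n(E^s_\alpha)\subset E^s_{G(\alpha)}$ then follows because maps tangent to $E^s$ preserve the rotation number to first order. On $E^s$, the gain from the strip widening $\eps'>\eps$, together with the distortion bounds for $\Psi_f$ (controlled by the choice $I=[-i\eps' l,i\eps' l]$), gives $\|D\cR_n^{}|_{E^s}\|\le C\cdot \mathrm{something}\to0$ as $n\to\infty$, uniformly in $\alpha\in\cB_K$ by compactness of $\Lambda_K$. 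The splitting is continuous in $\alpha$, so hyperbolicity holds uniformly on $\Lambda_K$.

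The main obstacle is the uniform contraction on $E^s$. The correction term from $D\Psi_f$ must be bounded in sup-norm on $\Pi_{\eps'}$ independently of $n$ and $\alpha$. We would try first to bound it via Koebe-type distortion of the uniformizing map on the glued cylinder. In particular, the dependence on $q_n$ must not blow up, so the averaging sum has to be handled by cancellation (coboundary structure) rather than crude estimation.
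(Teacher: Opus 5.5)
Your overall architecture is the paper's. The constants (tangent to $\{R_\zeta\}$) span the unstable direction, and the stable bundle is $V_0=\{\int_\TT v=0\}$. Every $v\in V_0$ is a coboundary $v=h\circ R_\alpha-h$ with $h$ analytic on a slightly thinner strip (Proposition~\ref{prop-conj-2}). $V_0$ is invariant because renormalization carries conjugacy deformations of $R_\alpha$ to conjugacy deformations of $R_{G^n(\alpha)}$. Your rotation-number argument for invariance is an acceptable substitute, provided you justify $\frac{d}{dt}\rho(R_\beta+tu)|_{t=0}=\int_\TT u$ at bounded-type $\beta$ (it follows from the same coboundary statement) and pass to complex directions by real-symmetry.

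The genuine gap is the step you yourself call the main obstacle. Uniform contraction of $D\cR_n$ on $V_0$ is the real content of the theorem, and the mechanism you offer does not give it.
\begin{itemize}
\item With $I=[-i\eps' l,i\eps' l]$ and $\eps'>\eps$ fixed, restricting from $\Pi_{\eps'}$ to $\Pi_\eps$ gains only an $n$-independent factor (about $2e^{-2\pi(\eps'-\eps)}$ on zero-mean functions). This is what makes $\cR_n$ compact, but compactness puts no bound on the size of eigenvalues.
\item The decay $e^{-2\pi|k|(\eps'-\eps)/\{q_n\alpha\}}$ has no basis in the construction. Renormalization samples the perturbation only on the thin strip $|\Im z|\lesssim\eps' l$ and magnifies it by $1/l$. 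So a mode $e^{2\pi i k z}$ becomes $e^{2\pi i k l w}$, which for $|k|\ll 1/l$ is nearly affine, not exponentially small in $1/l$.
\item Telescoping by itself does not help either. Take $v=h\circ R_\alpha-h$ and a return time $q$ with $q\alpha$ within $O(l)$ of an integer. The rescaled Birkhoff sum is $\frac1l\bigl(h(lw+q\alpha)-h(lw)\bigr)$, which for $q=q_n$ is $\approx\pm h'(lw)$. That is of size $\|h'\|$, uniformly in $n$. Smallness appears only after the $D\Psi_f$-correction cancels exactly the affine part of the rescaled conjugacy, and nothing in your proposal shows this.
\end{itemize}

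The missing idea, which the paper uses through Propositions~\ref{prop-conj-class} and \ref{prop-conj}, is to estimate the conjugacy rather than the vector field.
\begin{itemize}
\item If $f=\xi\circ R_\alpha\circ\xi^{-1}$, then $\cR_n f$ is conjugate to $R_{G^n(\alpha)}$ by the chart $\Psi_f\circ\xi\circ\Psi_{R_\alpha}^{-1}$.
\item The distortion of $\Psi_f(lz)$ is controlled by $\dist(f^{q_n},R_{q_n\alpha})\lesssim l\max|\xi''/\xi'|$. Hence the nonlinearity of the new chart is at most $c(\eps)\,l$ times that of $\xi$.
\item Linearized at $\xi=\mathrm{Id}+\zeta h$, this says $D\cR_n v=\tilde h\circ R_{G^n(\alpha)}-\tilde h$, with $\tilde h$ of size $O(l)\,\|h\|$ modulo constants.
\item Combined with $\|h\|_{\Pi_{0.9\eps}}\le C(K,\eps)\|v\|_{\Pi_\eps}$ (bounded type), this yields $\|D\cR_n|_{V_0}\|\le C(K,\eps)\,l\to0$, uniformly on $\Lambda_K$.
\end{itemize}

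For strip widening alone to suffice, the widening would have to grow with $n$. For example, take a fundamental domain of height comparable to $\eps$, so the renormalized map lives on a strip of height $\sim\eps/l$. Then the gain $e^{-c/l}$ on zero-mean functions could beat crude polynomial bounds on $D\cR_n$. But that is not the normalization you adopt, and you would still have to prove those crude bounds, including for the variation of the uniformization.
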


It is evident that  $\{R_\zeta, \zeta \in \bbC\}$ is an unstable direction, on which $\cR_n$ acts as an iterate of the Gauss map.

The construction of the stable bundle for $D\cR_n|_{\Lambda_K}$ requires more cleverness. In \cite{GY} it is shown to coincide with the (complex) tangent sub-bundle $V_0$ consisting of vector fields with zero average on the circle:
$$\displaystyle \int_\TT vdz=0.$$
The strategy of the proof is as follows. Firstly, it is shown
in \cite{GY} that, in full generality, analytic conjugacy classes of irrational rotations lie inside their stable manifolds. Specifically:
 \begin{proposition}\label{prop-conj-class}
   Let $\eps>0$ and $\alpha\in \cB_K$. There exists $\lambda\in(0,1)$  such that the following holds. Suppose $f\in\cD_\eps$ is locally conformally conjugate to $R_\alpha$ {via $\xi\colon\Pi_{\eps/2}\to \Pi_\eps$.\, } Then for sufficiently large $n$, for all $k$, the cylinder renormalizations $(\cR_n)^k(f)$ are defined, contained in $\cD_\eps$, and
   $$\dist_{\cD_\eps}((\cR_n)^k(f),R_{G^{nk}(\alpha)})<\text{\rm const}\cdot\lambda^{k } \dist_{\cD_\eps}(f,R_{\alpha}).$$
   \end{proposition}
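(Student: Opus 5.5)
The plan is to renormalize the conjugacy itself. Write $f=\xi\circ R_\alpha\circ\xi^{-1}$ on $\Pi_{\eps/2}$, normalized so that $\xi(0)=0$ (compose with a rotation if needed). Since $f$ is $\delta$-close to $R_\alpha$ and conjugate to it, we may also assume that $\xi$ is close to the identity in the sense $\|\xi-\mathrm{id}\|\le C\,\dist(f,R_\alpha)$ on a slightly smaller strip. This is a standard Cauchy-estimate / normalization argument: the conjugacy is unique up to post-composition with rotations, and the normalized one depends continuously on $f$ in the conjugacy class.

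The key observation is that cylinder renormalization commutes with conjugacy. The fundamental domain $V$ for $f$ is bounded by $I$ and $f^{q_n}(I)$, and its image $\xi^{-1}(V)$ is a fundamental domain for $R_\alpha$ bounded by $\xi^{-1}(I)$ and $R_\alpha^{q_n}(\xi^{-1}(I))$. Gluing by $f$, respectively by $R_\alpha$, therefore gives conformally equivalent cylinders, and the first-return maps correspond. Consequently $\cR_n f$ is conjugate to $\cR_n R_\alpha=R_{G^n(\alpha)}$ by the map
$$\xi_1=\Psi_f\circ\xi\circ\Psi_{R_\alpha}^{-1},$$
suitably descended to the quotient cylinders. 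Here we use that the gluing of $R_\alpha$ along $\xi^{-1}(I)$ rather than along a straight segment produces the same cylinder, uniformized by the rescaling by $\{n\alpha\}$ up to a correction close to the identity.

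The contraction comes from rescaling. $\Psi_{R_\alpha}^{-1}$ expands by the factor $1/l$, where $l\approx\{q_n\alpha\}$ is small. Hence the new conjugacy $\xi_1$ is defined on a strip of height roughly $(\eps/2)/l\gg\eps$, before being restricted back to $\Pi_{\eps/2}$. We have $\xi_1-\mathrm{id}=O(\|\xi-\mathrm{id}\|)$ on the large strip, and $\xi_1-\mathrm{id}$ is periodic with zero-average normalization. Cauchy estimates, or a Schwarz lemma for bounded holomorphic functions on a wide annulus, then show that on $\Pi_{\eps/2}$ the nonconstant Fourier modes are damped by a factor of order $e^{-2\pi(\eps/2)(1/l-1)}$. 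The constant mode is absorbed by renormalizing $\xi_1(0)=0$. Choosing $n$ large, uniformly over $\alpha\in\cB_K$ by compactness, this gives
$$\|\xi_1-\mathrm{id}\|_{\Pi_{\eps/2}}\le\lambda\|\xi-\mathrm{id}\|_{\Pi_{\eps/2}}$$
with a fixed $\lambda<1$. Iterating, $\xi_k-\mathrm{id}$ decays like $\lambda^k$. Since $(\cR_n)^k f=\xi_k R_{G^{nk}\alpha}\xi_k^{-1}$, we get $\dist((\cR_n)^kf,R_{G^{nk}\alpha})\le C\|\xi_k-\mathrm{id}\|$, and this yields the claim, with the iterates staying in the $\delta$-neighborhood of $\Lambda_K$ so that $\cR_n$ remains defined at every step.

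The main obstacle I expect is controlling $\Psi_f$. Uniformizing the glued cylinder for $f$ produces distortion, and this must be shown to be $O(\dist(f,R_\alpha))$ uniformly in $\alpha$, with analytic dependence on $f$. Only then does the composed conjugacy satisfy the claimed bounds, with constants independent of $k$. I would handle this by a perturbative quasiconformal/Beltrami estimate on the gluing, as in \cite{GY}. The choice $I=[-i\eps' l,i\eps' l]$ is what keeps the distortion of $\Psi_f$ bounded.
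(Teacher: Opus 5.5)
\paragraph{Genuine gap: the wide strip is not available.} The failing step is the claim that $\xi_1=\Psi_f\circ\xi\circ\Psi_{R_\alpha}^{-1}$ is defined on a strip of height about $(\eps/2)/l$.

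\begin{itemize}
\item The operator $\cR_n$ in the statement is built from the short segment $I=[-i\eps' l,i\eps' l]$, which you invoke yourself at the end. So $V$ is a box of size roughly $l\times 2\eps' l$.
\item The glued cylinder $V/\!\sim$ therefore has modulus bounded independently of $n$. $\Psi_f$ uniformizes it onto a strip of height about $\eps'$, not $(\eps/2)/l$.
\item The conjugacy $\xi$ does uniformize the much taller quotient of $\xi(\Pi_{\eps/2})$ by $f^{q_n}$. But $\cR_n f$ is expressed in the coordinate $\Psi_f$ of the short cylinder, so $\xi_1$ lives only on roughly $\Pi_{\eps'}$.
\item Hence the Fourier damping by $e^{-2\pi(\eps/2)(1/l-1)}$ has nothing to act on. What remains is $\|\xi_1-\mathrm{id}\|\lesssim\|\xi-\mathrm{id}\|+(\text{distortion of }\Psi_f)$.
\item The bound you aim for, distortion $=O(\dist(f,R_\alpha))$, gives no contraction, since the constant need not be less than $1$.
\end{itemize}

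If you meant a tall $I$ instead, your damping argument concerns a different operator. Transferring it to this $\cR_n$ requires comparing the two uniformizations, and that comparison is exactly the estimate below.

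\paragraph{The missing idea: contraction comes from the smallness of $V$.} This is the whole content of the paper's sketch.

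\begin{itemize}
\item On a box of size $O(l)$ around $0$, the chart $\xi$ is affine up to $O(l^2\max|\xi''|)$.
\item After rescaling by $l$, the glued domain $\Psi_{R_\alpha}(\xi^{-1}(V))/\!\sim$ is a straight annulus up to boundary deviations of size $O(l\max|\xi''|)$. The affine factor $\xi'(0)$ is absorbed by $\Psi_f$, which rescales by the actual displacement $f^{q_n}(0)$.
\item In the paper's form: $\dist(f^{q_n},R_{q_n\alpha})\le c\,l\max|\xi''/\xi'|$. Hence $\Psi_f(l\,\cdot)$ has nonlinearity $O(l\max|\xi''/\xi'|)$.
\item So the nonlinearity of the chart, which is insensitive to the affine rescaling, contracts by a factor $O(l)$. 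This is small once $n$ is large.
\end{itemize}

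With this estimate your bookkeeping in $\|\xi-\mathrm{id}\|$ can also be closed. Cauchy estimates on slightly smaller strips give $\|\xi_1-\mathrm{id}\|\le C\,l\,\|\xi-\mathrm{id}\|$ on $\Pi_{\eps/2}$. The factor, however, is $l$, not $e^{-c/l}$.

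\paragraph{A smaller point.} Your first step needs more than continuity of the normalized conjugacy in $f$. The linear bound $\|\xi-\mathrm{id}\|\le C\dist(f,R_\alpha)$ comes from solving $\phi(z+\alpha)-\phi(z)=(f-R_\alpha)(z+\phi(z))$, with $\phi=\xi-\mathrm{id}$. This uses the bounded-type small-divisor estimate and costs a loss of width.
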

 Since a rescaling of a conjugacy is a conjugacy of a rescaling of the map, the statement would have been trivial if $\Psi_f$ and its dependence on $f$ were linear. Since they are not, the proof in \cite{GY} involves the corresponding estimates on non-linearity (cf. Lemma 4.5 from \cite{GY}). In particular, the contraction is proven in \cite{GY} for a fixed $n$ and a large enough value of the width $\eps$ (renormalization is used to increase $\eps$ if needed).

 We note that in the case of a bounded type rotation number, the proof can be significantly streamlined. We outline the argument below for the sake of completeness, the reader may skip it.
 
 \begin{proof}[Sketch of proof of Proposition~\ref{prop-conj-class}]
 Define $l=|f^{q_n}(0)|$ as before, and set $N=q_n$.  We will show that $\mathcal R_n$ contracts the nonlinearity $\max_{\Pi_{0.5\eps}} |\xi''/\xi'|$ of a linearizing chart $\xi$ if $n$ is large enough (that is, if $l$ is small enough). Distortion bound on $\Psi(lz)$ is of the order $c(\eps) \dist (f^N,R_{\alpha N})$, since the map $f^N$ is used to define the fundamental domain. The latter  distance has the order   $$\dist (f^N,R_{\alpha N}) < c(\eps) l  \max_{\Pi_{0.5\eps}} |\xi''/\xi'|.$$
This implies that $$|l \Psi''/\Psi'| \le c(\eps) l  \max_{\Pi_{0.5\eps}} |\xi''/\xi'|$$ and hence contraction of the nonlinearity of the linearizing chart for small $l$.
 \end{proof}
 
 
The infinitesimal version of this statement is \cite{GY}:
\begin{proposition}
  \label{prop-conj}
For $\alpha\in\cB_K$, $\eps>0$, there exists $n$ with the following property. If the vector field $v$ is produced by conjugation deformation:
  \begin{equation}\label{eq-conj-1}
    R_\alpha+\zeta v+o(\zeta)=(\text{Id}+\zeta h)\circ R_\alpha\circ (\text{Id}+\zeta h)^{-1},
  \end{equation}
  then $D\cR_n^k$  contracts $v$ uniformly exponentially on $\Lambda_K$.
\end{proposition}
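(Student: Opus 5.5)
The plan is to obtain Proposition~\ref{prop-conj} by differentiating the nonlinear contraction estimate of Proposition~\ref{prop-conj-class} along the analytic curve of conjugates. Let $h$ be analytic on a strip slightly wider than $\Pi_\eps$; we may shrink $\eps$ to $\eps/2$ via the standard renormalization trick of widening the domain. Set $\xi_\zeta=\text{Id}+\zeta h$ and $f_\zeta=\xi_\zeta\circ R_\alpha\circ\xi_\zeta^{-1}$ for $\zeta$ in a small complex disk $|\zeta|<r$.

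First, for $|\zeta|<r$ each $f_\zeta$ lies in $\cD_\eps$ and is locally conformally conjugate to $R_\alpha$ by a chart $\xi_\zeta\colon\Pi_{\eps/2}\to\Pi_\eps$. The map $\zeta\mapsto f_\zeta$ is analytic, with $f_0=R_\alpha$ and $\frac{d}{d\zeta}f_\zeta|_{0}=v=h\circ R_\alpha-h$, matching \eqref{eq-conj-1}. We also have $\dist_{\cD_\eps}(f_\zeta,R_\alpha)\le C|\zeta|$.

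Second, apply Proposition~\ref{prop-conj-class} to every $f_\zeta$. We need its constants uniform in $\zeta$ and in $\alpha\in\cB_K$. This is where the bounded type hypothesis and compactness of $\Lambda_K$ enter: the $n$ and $\lambda$ in the sketch depend only on $K,\eps$ and a bound on the nonlinearity of $\xi$, which here is $O(|\zeta|)$. The proposition then gives
$$\|(\cR_n)^k(f_\zeta)-R_{G^{nk}(\alpha)}\|_{\cD_\eps}\le \text{const}\cdot\lambda^k C|\zeta|.$$
Now consider the analytic map $\Phi_k(\zeta)=(\cR_n)^k(f_\zeta)-R_{G^{nk}(\alpha)}$, which is analytic by Theorem~\ref{th-GY-1}. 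It takes values in the Banach space of analytic maps, vanishes at $0$, and is bounded by $\text{const}\,\lambda^k r$ on $|\zeta|<r$. The Cauchy estimate then gives
$$\|D\cR_n^k(v)\|=\|\Phi_k'(0)\|\le \text{const}\,\lambda^k.$$
By linearity in $h$, this yields $\|D\cR_n^k v\|\le \text{const}\,\lambda^k\|v\|$, provided $\|h\|$ is controlled by $\|v\|$. That control is the next step.

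Third, we relate $h$ to $v$. Since $\alpha\in\cB_K$ is Diophantine, the cohomological equation $h\circ R_\alpha-h=v$ is solvable for every $v$ with zero mean, and the solution satisfies $\|h\|_{\Pi_{\eps-\delta}}\le C(K,\delta)\|v\|_{\Pi_\eps}$. The loss of width is absorbed by running Proposition~\ref{prop-conj-class} on a slightly smaller strip, or equivalently by first renormalizing once to widen the domain. This shows that the conjugation deformations are exactly the zero-average fields $V_0$, which is the stable subbundle of Theorem~\ref{th-GY-2}, and that the contraction is uniform. Uniformity along the orbit holds because each $D\cR_n$ maps conjugation deformations at $R_\alpha$ to conjugation deformations at $R_{G^n\alpha}$, with $h$ replaced by its rescaling. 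The constants therefore depend only on $K$ and $\eps$.

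The main obstacle is the uniformity in the second step. Proposition~\ref{prop-conj-class} is stated for a single $f$, so we must check that the const and $\lambda$ depend only on $K$, $\eps$ and an a priori bound on $\max|\xi''/\xi'|$, not on the particular $f_\zeta$ or $\alpha$. I would first try to extract this directly from the sketch of the proof: there the contraction factor is $c(\eps)\,l$, with $l\asymp\prod_{j<n}\alpha_j$ bounded uniformly over $\cB_K$.
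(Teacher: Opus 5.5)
Your argument is correct and follows the route the paper indicates. The paper gives no separate proof: it presents Proposition~\ref{prop-conj} (citing \cite{GY}) as the infinitesimal version of Proposition~\ref{prop-conj-class}, and your approach passes from one to the other exactly that way. You differentiate the nonlinear contraction along the analytic family of conjugates $f_\zeta$ using Cauchy estimates, and the Diophantine bound $\|h\|\le C(K,\delta)\|v\|$ absorbs the loss of width. As you note yourself, the one point needing care is that $n$, $\lambda$ and the constant in Proposition~\ref{prop-conj-class} are uniform over the family $f_\zeta$ and over $\alpha\in\cB_K$; the sketch of its proof supplies this, since the contraction factor is $c(\eps)\,l$ with $l$ uniformly small on $\cB_K$.
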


The equation (\ref{eq-conj-1}) is equivalent to the cohomological equation
 \begin{equation}\label{eq-conj-2}
v(z)=h(z+\alpha)-h(z)\mod\ZZ.
 \end{equation}
 Evidently, any such $v$ lies in $V_0$. The converse is also true, in a very general setting, as shown via a Fourier series argument:
 \begin{proposition}
   \label{prop-conj-2}
   Suppose $\alpha\in(0,1)\setminus \QQ$ whose continued fraction convergents satisfy
   \begin{equation}\label{eq-fourier}
     \limsup \frac{\log q_{n+1}}{q_n}=0.
   \end{equation}
   Then for any  $v\in V_0$ there exists an analytic vector field $h$ in $\Pi_{0.9 \eps}$ solving the cohomological equation (\ref{eq-conj-2}).
 \end{proposition}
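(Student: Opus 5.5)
We solve the cohomological equation (\ref{eq-conj-2}) coefficient by coefficient in Fourier series and then bound the coefficients. Since $v$ is analytic on $\Pi_\eps$ and continuous up to the boundary, write
$$v(z)=\sum_{k\in\ZZ}\hat v_k e^{2\pi i k z}.$$
The condition $v\in V_0$ means exactly $\hat v_0=0$. The natural candidate is
$$h(z)=\sum_{k\neq 0}\frac{\hat v_k}{e^{2\pi i k\alpha}-1}e^{2\pi i k z},$$
which formally satisfies $h(z+\alpha)-h(z)=v(z)$ term by term. The proof then reduces to showing that this series converges to an analytic function on $\Pi_{0.9\eps}$.

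First, Cauchy's estimate: shifting the contour of integration to $\Im z=\pm\eps$ gives
$$|\hat v_k|\le \|v\|\,e^{-2\pi|k|\eps}.$$
Second, a lower bound on the small divisors: $|e^{2\pi i k\alpha}-1|\asymp \|k\alpha\|$, the distance from $k\alpha$ to $\ZZ$. The best-approximation property of convergents gives, for $q_m\le |k|<q_{m+1}$,
$$\|k\alpha\|\ge\|q_m\alpha\|\ge \frac{1}{2q_{m+1}}.$$

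The main step is to show that $1/\|k\alpha\|$ grows subexponentially in $|k|$, and this is exactly where (\ref{eq-fourier}) is used. Given $\delta>0$, hypothesis (\ref{eq-fourier}) gives $\log q_{m+1}\le \delta q_m$ for all large $m$. Then for $q_m\le|k|<q_{m+1}$,
$$\frac{1}{\|k\alpha\|}\le 2q_{m+1}\le 2e^{\delta q_m}\le 2e^{\delta|k|}.$$
Hence $|\hat h_k|\le C_\delta\|v\|e^{-2\pi|k|\eps+\delta|k|}$. On $\Pi_{0.9\eps}$ we have $|e^{2\pi i k z}|\le e^{2\pi|k|0.9\eps}$, so the $k$-th term is at most $C_\delta\|v\|e^{(-0.2\pi\eps+\delta)|k|}$. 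Choosing $\delta<0.2\pi\eps$ makes the series converge uniformly on $\Pi_{0.9\eps}$ (indeed on its closure), so $h$ is analytic there. Taking the result modulo $\ZZ$ as in (\ref{eq-conj-2}) causes no difficulty, because $h$ is a genuine periodic function.

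Finally, for $\alpha\in\cB_K$ the argument is even simpler: $q_{m+1}\le (K+1)q_m$, so $1/\|k\alpha\|\le 2(K+1)|k|$, which is a polynomial bound and in particular gives the estimate $\|h\|_{\Pi_{0.9\eps}}\le C(K,\eps)\|v\|_{\Pi_\eps}$. I expect no real obstacle beyond the careful use of the best-approximation inequality for $\|k\alpha\|$.
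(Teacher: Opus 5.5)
Your proof is correct and is essentially the argument the paper has in mind. The paper gives no details beyond invoking ``a Fourier series argument,'' and your coefficientwise solution $\hat h_k=\hat v_k/(e^{2\pi i k\alpha}-1)$ carries that argument out: the Cauchy bound $|\hat v_k|\le\|v\|e^{-2\pi|k|\eps}$, together with the best-approximation estimate showing that (\ref{eq-fourier}) makes $1/\|k\alpha\|$ grow subexponentially in $|k|$, gives convergence on $\Pi_{\eps'}$ for every $\eps'<\eps$, and in particular on $\Pi_{0.9\eps}$.
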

 Given the polynomial bound on the growth of $q_n$ in $\cB_K$, the equation (\ref{eq-fourier}) is clearly satisfied, so every zero-mean vector field comes from a conjugation deformation. This, and invariance of $\Lambda_K$, imply that $V_0$ is an invariant sub-bundle. Proposition \ref{prop-conj} implies that it is uniformly contracted, and Theorem~\ref{th-GY-2} follows.

 As a corollary, we obtain:
 \begin{theorem}
   \label{th-GY-3}
For $\eps>0, K\in \bbN$, there exists $n$ such that the hyperbolic set $\Lambda_K$ of the operator $\cR_n$ has a stable foliation by analytic submanifolds $W_\alpha\equiv W_\text{loc}^s(R_\alpha)$ of codimension one. 
   \end{theorem}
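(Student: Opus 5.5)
The plan is to derive Theorem~\ref{th-GY-3} from the general stable manifold theorem for uniformly hyperbolic compact invariant sets of a compact analytic operator on a Banach space, applied to $\cR_n$ on the $\delta$-neighborhood of $\Lambda_K$ in $\cD_\eps$.

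First, by Theorem~\ref{th-GY-1}, $\cR_n$ is a compact analytic map from a neighborhood of $\Lambda_K$ into $\cD_\eps$, and $\Lambda_K$ is a compact invariant set. By Theorem~\ref{th-GY-2}, for $n$ large the splitting is $T_{R_\alpha}\cD_\eps = E^u_\alpha\oplus V_0$, with $E^u_\alpha=\CC\cdot\partial_\zeta R_\zeta$ one-dimensional. The derivative of $\cR_n$ along $E^u$ is the derivative of the iterate of the Gauss map, which is uniformly expanding on $\cB_K$. The bundle $V_0$ is invariant and uniformly contracted by Proposition~\ref{prop-conj} together with Proposition~\ref{prop-conj-2}. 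Note that $V_0$ is constant as a subspace of the tangent space, so it is trivially continuous in $\alpha$. The angle between $E^u$ and $V_0$ is bounded below, because the projection onto $E^u$ along $V_0$ is the averaging functional $v\mapsto\int_\TT v\,dz$, which has norm at most $1$ in the sup-norm.

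Next, I would invoke the Hadamard--Perron stable manifold theorem for hyperbolic sets in Banach spaces (as in Hirsch--Pugh--Shub, in its Banach-space form). Compactness and analyticity of $\cR_n$ give uniform $C^1$, and in fact uniform analytic, control on a fixed neighborhood of $\Lambda_K$. The theorem then yields, for each $\alpha\in\cB_K$, a local stable manifold $W_\alpha$. This manifold is a graph over a ball in $V_0$ of a map into $E^u_\alpha$, tangent to $V_0$ at $R_\alpha$, and hence of codimension one. The family $W_\alpha$ depends continuously on $\alpha$, and it is characterized as the set of $f$ whose forward orbit stays $\delta$-close to the orbit $R_{G^{nk}(\alpha)}$ and converges to it exponentially.

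The remaining step, and the main obstacle, is analyticity of each leaf. The standard theorem only gives $C^r$ smoothness, while here the leaves are for non-periodic points of a Cantor set. I would handle this with the graph transform in the holomorphic category. Write candidate leaves as graphs $\phi\colon B_{V_0}\to E^u$ and iterate the backward graph transform along the orbit $\alpha_k=G^{nk}(\alpha)$. Start from the flat graphs $\phi\equiv 0$ at $R_{\alpha_k}$ and pull back $k$ times to obtain holomorphic graphs $\phi_k$ at $R_\alpha$. Hyperbolicity makes this transform a uniform contraction in the sup-norm on bounded holomorphic maps, and the space of such maps is closed under uniform limits. Consequently $\phi_k$ converges uniformly to the leaf graph, which is therefore holomorphic. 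Since all estimates are uniform in $\alpha\in\cB_K$, the leaves have a uniform size, so together they form a stable foliation (lamination) of a neighborhood of $\Lambda_K$ by codimension-one analytic submanifolds.
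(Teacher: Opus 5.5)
Your proposal is correct and follows the paper's route. The paper states Theorem~\ref{th-GY-3} as a direct corollary of the analyticity and compactness of $\cR_n$ (Theorem~\ref{th-GY-1}) and the uniform hyperbolicity of $\Lambda_K$ with stable bundle $V_0$ (Theorem~\ref{th-GY-2}) via the stable manifold theorem; your holomorphic graph-transform argument only makes explicit the analyticity of the leaves, which the paper leaves implicit (note that the sup-norm contraction of the graph transform also needs a Lipschitz or cone bound on the graphs, which is preserved under pullback).
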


\section{Proofs of analytic linearization theorems for one-dimensional maps}

\subsection{Proof of Theorem~\ref{th-main-1}}
Fix $\alpha \in \cB_K$. In the space $\cD_{\eps/2}$, consider the submanifold $W_\alpha\equiv W_\text{loc}^s(R_\alpha)$ as in Theorem~\ref{th-GY-3}.

Assume that $f\in W_\alpha$ and 
consider the one-parameter family
$$f_w(z)\equiv f(z-w)+w \mod\ZZ.$$
We note that $f_w\in W_\alpha$ for all real $\alpha$. This is implied by Proposition~\ref{prop-conj-class} together with the following lemma:

  \begin{lemma}
    \label{lem-conj}
    There exists $s=s(K,\eps)>0$ such that for every $\alpha\in\cB_k$ any  diffeomorphism $f\colon \TT \to \TT$,  $f\in \cD_{\eps}\cap U_s(R_\alpha)$ with rotation number $\alpha$     lies in $W_\alpha$.
    \end{lemma}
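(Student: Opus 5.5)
Here is the plan. Inside a small neighborhood of $\Lambda_K$, the local stable manifold $W_\alpha$ is the set of $f$ whose renormalizations $\cR_n^k f$ stay $\delta$-close to $\Lambda_K$ for all $k\ge 0$. By uniform hyperbolicity (Theorem~\ref{th-GY-2}), such orbits in fact converge exponentially to the orbit $R_{G^{nk}(\alpha)}$. The task is therefore to show that a real circle diffeomorphism with rotation number $\alpha\in\cB_K$, close to $R_\alpha$, has all its renormalizations defined and close to $\Lambda_K$. The rotation number is the only real obstruction, since the unstable direction is the rotation direction.

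\textbf{Step 1 (renormalization respects rotation number).} For a real $f\in\cD_\eps$ near $R_\alpha$, the renormalization $\cR_n f$ is real-symmetric (Theorem~\ref{th-GY-1}). It is a circle diffeomorphism, being the projection of the first return map to the fundamental domain. Its rotation number is $G^n(\alpha)$; this is the standard combinatorial fact that the first return map acts on the glued cylinder with rotation number given by the Gauss-map iterate. It can be read off from the case $f=R_\alpha$ by continuity and the monotonicity of the rotation number along the family $f+t$. Hence $\rho(\cR_n^k f)=G^{nk}(\alpha)\in\cB_K$ for as long as the iterates are defined.

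\textbf{Step 2 (no escape along the unstable direction).} Work in a hyperbolic chart near $\Lambda_K$. There the unstable coordinate of a real map $g$ near $R_\beta$ can be taken to be $\rho(g)-\beta$ up to a bounded factor, since the unstable direction is spanned by $\{R_\zeta\}$ and the rotation number is a continuous function constant along stable leaves of real maps. More concretely, I would use the shadowing/cone argument. Suppose the orbit $\cR_n^k f$ left the $\delta$-neighborhood. The stable component is contracted, so the unstable component would have to grow. Along the real slice, however, the unstable component is comparable to $\rho(\cR_n^k f)-\rho(R_{\gamma_k})$, where $R_{\gamma_k}$ is the nearest point of the unstable leaf through the shadowing orbit. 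Growth of this quantity would force $\rho(\cR_n^k f)$ to differ from $G^{nk}(\alpha)$ before exit, contradicting Step 1. The choice of $s$ enters here: it must be small enough that $f$ lies in the domain where the local product structure of the hyperbolic set holds, uniformly in $\alpha$ by compactness of $\Lambda_K$.

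\textbf{Step 3 (conclusion).} By Steps 1 and 2 the forward orbit stays $\delta$-close to the orbit of $R_\alpha$. By the characterization of local stable manifolds for uniformly hyperbolic sets (Theorem~\ref{th-GY-3}), this gives $f\in W_\alpha$.

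The main obstacle is Step 2. It requires relating the unstable coordinate of the hyperbolic splitting to the rotation number for real maps. The cleanest route is to show that the real leaf $W_\beta\cap\{\text{real maps}\}$ is contained in $\{\rho=\beta\}$, and that the real unstable leaf $\{R_\zeta:\zeta\in\RR\}$ meets each such leaf exactly once.

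Continuity together with strict monotonicity of $\rho$ at irrational values gives this. Each point of $\{R_\zeta:\zeta\in\RR\}$ lies on a single stable leaf, namely $W_\zeta$ when $\zeta\in\cB_K$. Since $\rho$ is strictly monotone in $\zeta$ along this unstable leaf, the leaf $W_\alpha$ is the only one carrying rotation number $\alpha$ near $R_\alpha$.
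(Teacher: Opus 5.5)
There is a genuine gap, and it sits in your Step~2. What you need is an \emph{existence} statement: $f$ lies on some stable leaf at all, or equivalently $\cR_n^k f$ stays near $\Lambda_K$. Your ``cleanest route'' only gives \emph{uniqueness}. Moving along the line of rotations $\{R_\zeta\}$ through $R_\alpha$ shows that no leaf $W_\beta$ with $\beta\neq\alpha$ contains a real map of rotation number $\alpha$. But the leaves $W_\beta$, $\beta\in\cB_K$, form a Cantor lamination that fills no neighbourhood of $R_\alpha$: the line $\{R_\zeta\}$ meets it only at $\zeta\in\cB_K$. So a priori $f$ could lie in a gap of the lamination, and nothing you wrote excludes this.

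The cone/shadowing sketch does not close the gap, for two reasons.
\begin{itemize}
\item A stable/unstable decomposition of $\cR_n^k f$ relative to a shadowed orbit $R_{\gamma_k}$ is only available once you know the orbit stays close to $\Lambda_K$, which is what you are trying to prove.
\item The claim that, for real maps, the unstable coordinate relative to $W_\beta$ is comparable to $\rho-\beta$ already contains the lemma. It says exactly that real maps off $W_\alpha$ have $\rho\neq\alpha$, so it cannot be used as an input.
\end{itemize}
Your opening description of $W_\alpha$ as the set of maps whose renormalizations stay $\delta$-close to $\Lambda_K$ is also inaccurate: that set is $\bigcup_\beta W_\beta$.

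The missing idea, which is what the paper uses, is to apply transversality at $f$ rather than at $R_\alpha$. The complex line $\zeta\mapsto R_\zeta\circ f$ is $s$-close to $\zeta\mapsto R_{\alpha+\zeta}$. The latter is transverse to the complex codimension-one submanifold $W_\alpha$, uniformly in $\alpha$ by uniform hyperbolicity. So for $s$ small the line meets $W_\alpha$ at some small $\zeta_*$. It remains to show $\zeta_*=0$.
\begin{itemize}
\item A real $\zeta_*\neq 0$ is excluded by your Step~1 fact that real maps in $W_\alpha$ have rotation number $\alpha$. Combine this with strict monotonicity of $\zeta\mapsto\rho(R_\zeta\circ f)$ at the irrational value $\alpha$.
\item A non-real $\zeta_*$ is excluded dynamically in the paper. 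For $\Im\zeta>0$, the map $R_\zeta\circ f$ sends $\Pi_\eps\cap\{\Im z>0\}$ strictly into itself. Hence the orbit of $0$ cannot accumulate at $0$, as it must for maps in $W_\alpha$.
\item Alternatively, you could exclude non-real $\zeta_*$ using the real symmetry of $W_\alpha$ together with uniqueness of the local intersection point.
\end{itemize}
With this, your Step~2 and the shadowing machinery become unnecessary.
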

  \begin{proof}

Assume $f\notin W_\alpha$. Since $\alpha$ is irrational, we have $\rho(R_\zeta \circ f) \neq \alpha$ for all real $\zeta\neq 0$. Hence $R_\zeta\circ f\notin W_\alpha$ for all real  $\zeta\neq 0$. For a complex value of $\zeta$, say $\Im \zeta >0$, the intersection $\Pi_\eps\cap \{\Im z>0\}$ is mapped strictly inside itself under $R_\zeta \circ f$ and hence the orbit of zero cannot accumulate onto itself. Thus, $R_\zeta\circ f\notin W_\alpha$ for all complex  $\zeta$.  Since $W_\alpha$ has complex codimension $1$ and is transversal to $\{R_\zeta, \zeta\in \bbC\}$,  this is not possible for $f$ in a small enough neighborhood of $R_\alpha$.
    \end{proof}

Alternatively, one could derive $f_w\in W_\alpha$ from the standard Denjoy-type estimates (see e.g. \cite{dMvS,Yoccoz2002}) which show that
an analytic map $f:\TT\to\TT$ with $\rho(f)\in\cB$ is smoothly conjugate to $R_\alpha$ on $\TT$.

\ignore{
  However, we will be needing Lemma~\ref{lem-conj} in the proof of Theorem~\ref{th-main-2}.

Fix $w\in\RR$ and let $\ell_k(z)=f_w^{q_k}(0)z$. Consider the sequence of renormalizations of $f_w$ in the sense of commuting pairs:
$$\cR_{pairs}^kf_w\equiv (\ell_k^{-1}\circ f_w^{q_k}\circ \ell_k,\ell_k^{-1}\circ f_w^{q_{k+1}}\circ \ell_k).$$
Lemma~\ref{lem-conj} implies that $\cR_{pairs}^kf_w$ converges to the space of affine interval exchange transformations exponentially fast in $k$ in the uniform metric on $\bbR$.
Since $f=f_0$ is cylinder renormalizable infinitely many times, \hl{first-return maps under $f^{q_k}, f^{q_{k+1}}$ to a neighborhood of zero are defined in a neighborhood of zero of size $const*L_k$, where $L_k = |f^{q_k}(0)|$. Since $\alpha$ has bounded type, we can expand this estimate to the whole circle: these maps are defined on a neighborhood  of size $const*L_k$ of any point $w\in \TT$. Hence }
 the maps of the pair $\cR_{pairs}^k f_w$ are defined in a uniformly large neighborhood of the origin (which, in particular, is large enough to accommodate the cylinder renormalization construction).

 Hence the exponentially fast convergence to pairs of affine maps holds in the complex neighborhood of the origin. We see that $f_w\in W_\alpha$ and Proposition~\ref{prop-pf-1} is proved.
 
 \hl{Not immediately, because of nonlinear rescalings in $\mathcal R_n$.}
}

Thus, the complex-analytic family $\{f_w, w\in \Pi_{\eps/2}\}$ intersects the complex-analytic manifold $W_\alpha\subset\cD_{\eps/2}$ over a circle $w\in\TT$. Hence the intersection contains an annulus $\Pi_\tau$.
The fact that $f_w\in W_\alpha$ for $w\in \Pi_\tau$ implies that  the orbit of $0$ under $f_w$ is infinite, stays in $\Pi_{\eps/2}$, and accumulates to $0$. Since $f_w$ is conjugate to $f$ via $z\to z+w$, we conclude that  the orbit of any point  $z\in \Pi_\tau$ under $f$ is infinite, stays in $\Pi_\eps$, and accumulates to $z$. Therefore, for small $\tau$, the map $f$ is a conformal authomorphism of an open, non-simply-connected set $\cup_{n\ge 0} f^n(\Pi_\tau)$ without fixed or periodic points. Hence this set is an annulus, and $f$ is analytically conjugate to the rotation on this annulus.

Finally, let us prove that the conjugacy extends to $\Pi_{\eps/2}$. It is sufficient to prove that $f_w\in W_{\alpha}$ for $w\in \Pi_{\eps/2}$ if $f$ is close enough to the rotation. Indeed, otherwise the family $\{f_w, w\in \Pi_{\eps/2}\}$ leaves the analytic manifold $W_\alpha$ through its relative boundary. However, since $\frac{d}{dw} f_w = -f'(z-w)+1$ is close to $0$ for $f$ close to the rotation, this cannot happen for $w\in \Pi_{\eps/2}$. This completes the proof.

\subsection{Proof of Theorem~\ref{th-main-2}}

Proposition  \ref{prop-conj-class} implies that if $f\in \cD_{\eps}$ is close to rotation and analytically conjugate to it in $\Pi_{\eps/2}$, then it belongs to the embedded analytic submanifold $W_\alpha$. We will prove the converse statement: if $f\in W_\alpha\subset \cD_{\eps}$, then it is conformally conjugate to $R_\alpha$ in $\Pi_{\eps/2}$.
  
  We are going to give two different proofs of Theorem~\ref{th-main-2}. The first one borrows from the arguments in \cite[Theorem 5.3]{GY} to derive Theorem~\ref{th-main-2} from Theorem~\ref{th-main-1}.
  The second one shares ideas with our proof of Theorem~\ref{th-main-1} and is entirely novel. 

\medskip
\noindent{\bf Version I.}
Let $f\in W_\alpha$ (and is not necessarily real-symmetric). Then the orbit $\{f^j(0)\}$ belongs to $\Pi_\eps$ and is an infinite sequence of points. So, in particular, all images of $0$ are distinct. This orbit moves holomorphically over $W_\alpha$ and hence, by the infinite-dimensional version of the $\Lambda$-Lemma \cite{BR}, so does its closure
  $$\Gamma_f\equiv \overline{\{f^j(0)\}}.$$
    Since $\Gamma_{R_\alpha}=\TT$, for every $f\in W_\alpha$, the set $\Gamma_f$ is an invariant quasi-circle. If $f$ is sufficiently close to rotation, $\Gamma_f$ is close to the real line. Note that $f$ is topologically conjugate to $R_\alpha$ on $\Gamma_f$.

    The quasicircle $\Gamma_f$ divides $\Pi_\eps$ into two cylinders. We will call $\Pi^+$ the upper and $\Pi_-$ the lower of them. Let us conformally uniformize $\Pi^+$ by
    $$h^+:\Pi^+\to \{\Im z\geq 0\}/\ZZ,\text{ so that }h^+(\Gamma_f)=\TT,\text{ and }h^+(w)=0,$$
    where $w\in \Gamma_f$.
    
    Denote $$g^+\equiv h^+\circ f\circ (h^+)^{-1}.$$
    This map preserves $\TT$ and is close to the rotation. Extend it to a neighborhood of $\TT$ using Schwarz reflection. By Theorem \ref{th-main-1}, there exists a conformal conjugacy on $\Pi_{\eps/2}$:
    $$\upsilon^+\circ g^+\circ \upsilon^{-1}=R_\alpha,\; \upsilon (0)=0.$$

    Let us carry out the same construction on $\Pi^-$ to obtain $h^-$ and $\upsilon^-$. The compositions
    $$\phi^+\equiv \upsilon^+\circ h^+,\text{ and }\phi^-\equiv \upsilon^-\circ h^-$$
    conjugate $f$ to $R_\alpha$ above and below $\Gamma_f$. They agree on $\Gamma_f$ since both of them send $w\to 0$. Since quasi-circles are holomorphically removable, these two maps glue over $\Gamma_f$ to a conformal conjugacy $\phi$ of $f$ to $R_\alpha$.

    \medskip

    \noindent
        {\bf Version II.}
        We first note:
        \begin{proposition}
          \label{prop-curve-2}
          Suppose $f\in W_\alpha$ for $\alpha\in \cB_K$. Then $f$ has an invariant topological circle $\Gamma_f\ni 0$ and $f|_{\Gamma_f}$ is topologically conjugate to $R_\alpha$.
          \end{proposition}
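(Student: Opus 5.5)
The plan is to build $\Gamma_f$ as the closure of the orbit of $0$ and to read its geometry off the renormalization tower. The decisive input is that for $f\in W_\alpha$ the iterates $\cR_n^k f$ stay defined forever and converge exponentially fast to the rotations $R_{G^{nk}(\alpha)}$. I will avoid the $\Lambda$-lemma used in Version I; instead the circle is assembled directly from the nested fundamental domains of the cylinder renormalizations.

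\emph{Step 1: the orbit of $0$.} Since $\cR_n^k f$ is defined for every $k$, each stage produces a fundamental domain $V_k$ bounded by $I_k$ and $f^{q_{nk}}(I_k)$. Each renormalization is the first return map to $V_k$, pushed through the uniformizing chart, so the whole tower consists of compositions of the charts $\Psi$, which are univalent with small distortion. From this I will deduce that the first $q_{nk}$ iterates of $0$ stay in $\Pi_\eps$ and are pairwise distinct. I will also get the combinatorics: the cyclic order of $\{f^j(0)\}_{j<q_{nk}}$ along the ``horizontal'' direction is the same as for $R_\alpha$. This holds because $\cR_n^k f$ is close to a rotation, and its combinatorics is pulled back through the charts, exactly as in the commuting pairs picture.

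\emph{Step 2: building the circle.} Let $\Psi_k$ be the composition of the $k$ rescalings. The arc of $\Gamma_f$ inside $V_k$ should be the image under $\Psi_k^{-1}$ of the arc of $\TT$ for $\cR_n^k f$, up to an error. Because $\dist(\cR_n^k f,R_{G^{nk}\alpha})\le C\lambda^k$ and the charts have bounded distortion, the pieces of orbit between consecutive returns lie within $C\lambda^k$ (in rescaled units) of straight segments. I will define $\gamma_k$ as the polygon through $f^j(0)$, $0\le j<q_{nk}$, taken in rotation order. The Cauchy estimate above shows that $\gamma_k$ converges uniformly to a continuous closed curve $\Gamma_f$. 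A limit of polygons need not be simple, so injectivity needs an extra argument. I will show that distinct points of the orbit closure are separated by the same combinatorial intervals as for the rotation. The key estimate is that the orbit points $f^j(0)$ with $j<q_{nk}$ partition the curve into arcs of diameter $\asymp |f^{q_{nk}}(0)|\to0$, with comparable neighbours, since bounded type gives the real a priori-type bounds here. The map $R_\alpha$-orbit point $\mapsto f^j(0)$ is then uniformly continuous and extends to a homeomorphism $\phi\colon\TT\to\Gamma_f$ satisfying $\phi\circ R_\alpha=f\circ\phi$ on a dense set, hence everywhere by continuity. Since $\phi(0)=0$, we get $0\in\Gamma_f$, and invariance follows.

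\emph{Main obstacle.} The hardest step is proving that $\phi$ is injective and that the arcs shrink, i.e.\ the complex ``bounded geometry'' of the orbit at all scales. I would first derive it from uniform closeness of $\cR_n^k f$ to rotations together with the Koebe distortion theorem applied to the univalent charts $\Psi_k^{-1}$ on definite neighbourhoods. Bounded type ensures that the ratio of consecutive scales $|f^{q_{n(k+1)}}(0)|/|f^{q_{nk}}(0)|$ is bounded above and below, which is what yields both the shrinking and the separation.
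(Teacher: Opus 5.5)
Your route is genuinely different from the paper's. The paper never parametrizes the orbit of $0$. It thickens the level-$k$ dynamical partition into tiles, namely the iterates $f^j(V)$ of the two fundamental rectangles of the $k$-th cylinder renormalization ($j<q_{nk+1}$ for one, $j<q_{nk}$ for the other). It lets $P_{nk}$ be their union and sets $\Gamma_f=\bigcap_k P_{nk}$; the conjugacy is the correspondence between nested tiles and nested partition intervals of $R_\alpha$. You instead take $\Gamma_f=\overline{\{f^j(0)\}}$ and extend $\{j\alpha\}\mapsto f^j(0)$ by uniform continuity. Your version identifies $\Gamma_f$ with the orbit closure, which is the set used in Version I and what Version II relies on when it says orbits of points of $\Gamma_f$ accumulate at $0$. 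The conjugacy relation and $0\in\Gamma_f$ are then automatic once $\phi$ is continuous. The paper's construction instead gets injectivity and the circle topology essentially for free. The tiles of one level are iterates of fundamental domains before their first return, so they have pairwise disjoint interiors and are arranged cyclically exactly like the partition intervals of the rotation.

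That is exactly where your sketch has a gap. Comparable diameters of neighbouring arcs and bounded ratios of consecutive scales give shrinking, but nothing in them prevents two non-adjacent arcs from meeting, so they do not yield separation. Koebe for $\Psi_k^{-1}$ also controls the geometry only near $V_k$, not in the other tiles. The missing step is qualitative. Suppose $s\in\TT$ lies in a level-$k$ partition interval $R_\alpha^i(J)$, with $J$ one of the two fundamental intervals. By the first-return combinatorics of the tower, every $f^j(0)$ with $\{j\alpha\}\in R_\alpha^i(J)$ and $j\ge i$ has the form $f^i(y)$ with $y$ in the corresponding rectangle $V$, so $\phi(s)\in f^i(\overline V)$. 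Now take $k$ so large that two distinct points $s,t$ are separated by at least two level-$k$ intervals; this places $\phi(s)$ and $\phi(t)$ in disjoint closed tiles, so $\phi(s)\neq\phi(t)$.

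This tile index is also the only meaningful version of your ``cyclic order along the horizontal direction''. For non-real $f$ there is no a priori horizontal order, since you do not yet know $\Gamma_f$ is a graph over $\TT$. Finally, uniform shrinking of the arcs requires distortion control of $f^j$ on $V$ for all $j<q_{nk+1}$. Since $f\in W_\alpha$ need not be real-symmetric, this cannot come from real a priori bounds and must be extracted from the renormalization tower itself. The paper also leaves this to the reader, but it is not supplied by the charts $\Psi_k$ alone, as your sketch suggests.
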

        \begin{proof}
          Let $V_n\ni 0$ be the fundamental rectangle in the construction of the $k$-th cylinder renormalization $\cR_n^kf$ lifted to the domain of definition of $f$. Denote
                   $$P_{nk}\equiv \left( \bigcup_{0\leq j\leq q_{{nk}+1}-1}f^j(V_{n})\right)\bigcup \left( \bigcup_{0\leq j\leq q_{nk}-1}f^j(V_{n+1})\right).$$
              This is a direct analogue of an $nk$-th dynamical partition for a homeomorphism of the circle. We set
              $$\Gamma_f\equiv \bigcap_nP_{nk}.$$
              Verifying the required properties is straightforward.
        \end{proof}
        A version of the above construction has been used previously in \cite{GaidYam1,GRY,KAM-yam}.
        
        It is not difficult to show, along the same lines, that $\Gamma_f$ is smooth, but we will not need that.

        Let $f\in W_\alpha$ and denote
        $$f_w(z)=f(z-w)+w,$$
        with $\{f_w, w\in \Pi_{\eps/2}\}\subset \mathcal D_{0.4\eps}$.
        Let $\ell_k(z)=f^{q_k}(0)z$. Consider the sequence of renormalizations of $f$ in the sense of commuting pairs:
$$\cR_{\rm pairs}^kf\equiv (\ell_k^{-1}\circ f^{q_k}\circ \ell_k,\ell_k^{-1}\circ f^{q_{k+1}}\circ \ell_k).$$
        The $\cR_{\rm pairs}^kf$ converge to the space of affine interval exchange transformations exponentially fast in $k$ in the uniform metric on a neighborhood of the circle.

        Let $w\in\Gamma_f$, then the orbit of $w$ accumulates at $0$. Replacing $f$ with its cylinder renormalization as necessary, we see that the cylinder renormalization of $f_w$ is a cylinder renormalization of  the almost affine pair $\cR_{\rm pairs}^kf$ with a different base point. Using the fact that $\rho(f|_{\Gamma_f})\in\cB$, we see that the corresponding cylinder renormalization of $f_w$ is close to a rotation.
        Hence
                $$f_w\in W_\alpha \text{ for }w\in \Gamma_f.$$

        The analytic family $f_w$ has a non-trivial intersection with the complex-analytic manifold $W_\alpha$, hence their intersection contains an open set around $0$. As a consequence, $f$ has an invariant annulus $A\supset \Gamma_f$. Arguing as in the proof of Theorem \ref{th-main-1}, we see that
                $f|_A$ is conformally conjugate to $R_\alpha$ in $\Pi_{\eps/2} $.

\section{Proof of analytic linearization in two dimensions}
\label{sec:2d}
\subsection{Extending renormalization horseshoe to 2D maps}
We recall very briefly the results of the second author from \cite{KAM-yam}. Since cylinder renormalization construction of \cite{GY} relies on conformal uniformization of a doubly connected Riemann surface, it does not directly translates to two-dimensional maps. Instead, we consider the language of renormalization of pairs. 
We set
$$T_\theta(z)=z+\theta,\text{ and }\alpha(z)=T_1(z)=z+1.$$
Let us fix a real-symmetric topological disk $W\Supset [0,1]$. We define $\cC_W$ to be the space of maps $\beta$ such that:
\begin{itemize}
\item $\beta$ is a bounded analytic map in $W$, continuous up to the boundary;
\item $\alpha\circ \beta=\beta\circ \alpha$ where defined.
\end{itemize}
Equipped with the uniform norm,  $\cC_W$ is a Banach manifold (since $\beta(z)-z$ is a $1$-periodic function).

We will refer to pairs $(\alpha,\beta)$ where $\beta\in\cC_W$ as {\it normalized commuting pairs}.
The connection with the usual definition of commuting pairs (which we will refer to as {\it non-normalized} commuting pairs, to avoid confusion) \cite{GaidYam1} is as follows. Let $\zeta=(\eta,\xi)$ be a commuting pair of analytic diffeomorphisms. The model case to consider is a map $f$ of an annulus around $\TT$ which is close to an irrational rotation $R_\theta$, and
$$\eta=f^{q_n}\text{, and }\xi=f^{q_{n+1}}.$$
Now, let $\Psi_\eta$ be a locally conformal  solution of  the functional equation
$$\Psi_\eta^{-1}\circ \eta\circ \Psi_\eta=\alpha$$
and set $\beta=\Psi_\eta^{-1}\circ\xi\circ \Psi_\eta.$ This can be accomplished following \cite{GY} in such a way that $\Psi_\eta$ analytically depends on $\eta$.

The map $\beta$ naturally projects to an analytic map $f_\beta$ of the quotient $W/_{z\sim z+1}$ and vice versa. If $\beta$ is real-symmetric and close to a translation, then $f_\beta$ is a $C^\omega$-diffeomorphism of the circle $\TT$. If its rotation number $\rho(f_\beta)\neq 0$, define 
$$\kappa(\beta)=[1/\rho(f_\beta)]\text{ and }\gamma\equiv \beta^{\kappa(\beta)}\circ \alpha.$$
Renormalization $\cR$ on the space of normalized almost commuting pairs is defined as
$$\cR:(\alpha,\beta)\mapsto (\alpha,\Psi_\beta\circ\gamma\circ\Psi_\beta^{-1}).$$
If we fix $K\in\NN$ then the set $$\hat \Lambda_K\equiv \{(\alpha,T_\theta)|\; \theta\in\cB_K\}$$
is $\cR$-invariant, and the definition of $\cR$ extends by continuity to non real-symmetric analytic maps in its neighborhood.
Renormalization $\cR$ on normalized commuting pairs is simply the cylinder renormalization defined in \cite{GY}, 
thus Theorem~\ref{th-GY-1} translates to this setting {\it mutatis mutandis}, see below.

Next, we enlarge the space of normalized commuting pairs:
\begin{definition}
We say that $(\alpha,\beta)$ is {\it a  normalized almost commuting pair} if
\begin{itemize}
\item $\beta$ is a bounded analytic map in $W$, continuous up to the boundary;
\item $[\alpha,\beta](z)\equiv \alpha\circ \beta(z)-\beta\circ \alpha(z)=o(z^2)$.
\end{itemize}
We denote the space of such maps $\beta$ by $\cAC_W$; as seen in \cite{GaidYam1} it is a Banach submanifold of the space of bounded analytic functions in $W$, continuous up to the boundary.
The definition of $\cR$ is naturally extended to these pairs; the image of an almost commuting pair is again an almost commuting pair.

Just as before, these pairs correspond to  conformal rescalings of {\it non-normalized} almost commuting pairs \cite{GaidYam1,GRY} $\zeta=(\eta,\xi)$ with 
$$[\zeta](z)=\eta\circ \xi(z)-\xi\circ\eta(z)=o(z^2).$$
  \end{definition}

As shown in \cite{KAM-yam}, the almost commutation condition does not create any new unstable directions for $\cR$ -- renormalization is
``commutativity improving''. The corresponding hyperbolicity statement is as follows \cite{KAM-yam}:

\begin{theorem}
  \label{th:hyperb2}
  Let $K\in\NN$.
There exists a domain $W$ such that the following holds. The operator $\cR$ maps an open neighborhood of $\hat\Lambda_K$ in  $\cAC_W$ to a compact subset of $\cAC_W$. Its differential at every point is a compact linear operator. The invariant set $\hat\Lambda_K$  is uniformly hyperbolic, with one-dimensional unstable direction. It has a codimension one strong stable foliation by analytic submanifolds.
  \end{theorem}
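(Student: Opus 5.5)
The plan is to split the statement into two layers. The first layer is the one-dimensional theory for genuinely commuting normalized pairs, which we have already: Theorems~\ref{th-GY-1}, \ref{th-GY-2} and~\ref{th-GY-3}, transported from $\cD_\eps$ to $\cC_W$. The second layer is the extra almost commuting directions in $\cAC_W$, which we will show are contracted.

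\textbf{Step 1 (setting up the operator).} Choose $W$ to be a sufficiently large real-symmetric disk, so that for $\beta$ near a translation $T_\theta$ with $\theta\in\cB_K$ the map $\gamma=\beta^{\kappa(\beta)}\circ\alpha$ and the uniformizing chart $\Psi_\beta$ are defined on a neighborhood of $\overline W$ compactly containing $W$. If necessary, replace $\cR$ by a fixed iterate $\cR^m$, as was done with $\cR_n$. The key point is that $\Psi_\beta$ depends analytically on $\beta$ through the solution of the functional equation \cite{GY}. Then $\cR$ is analytic, and it is compact because the composition with $\Psi_\beta^{\pm1}$ extends to a larger domain and Montel's theorem applies. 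The same argument gives compactness of $D\cR$. Invariance of $\hat\Lambda_K$ holds because $\cR(\alpha,T_\theta)=(\alpha,T_{G(\theta)})$.

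\textbf{Step 2 (splitting the tangent space).} At $(\alpha,T_\theta)$ the tangent space of $\cAC_W$ consists of analytic $v$ on $W$ with $v(z+1)-v(z)=o(z^2)$. Split $v=v_{per}+v_{nc}$. Here $v_{per}$ is $1$-periodic, so it is tangent to $\cC_W$. The complement $v_{nc}$ is spanned by the ``non-commuting'' part, for instance parametrized by the defect $v(z+1)-v(z)$, which vanishes to second order at $0$.

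On the periodic part, the identification of $\cR$ with cylinder renormalization, together with Theorem~\ref{th-GY-2} and Propositions~\ref{prop-conj}--\ref{prop-conj-2}, gives the following:
\begin{itemize}
\item a one-dimensional unstable direction $\{T_\zeta\}$ on which $\cR$ acts by the Gauss map;
\item a uniformly contracted zero-mean sub-bundle.
\end{itemize}

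\textbf{Step 3 (contraction of the commutator).} This is the main obstacle. We must show that $D\cR$ contracts the commutator direction, that is, that renormalization is commutativity improving. I would first estimate the non-normalized pair $\zeta=(\eta,\xi)$ behind the renormalized pair. Its commutator is an iterated composition of the commutator of the original pair, rescaled by a factor of size $l$, the length of the fundamental interval. Since the commutator is $o(z^2)$, after linear rescaling by $l$ it picks up a factor of order $l^2/l=l$. For $W$ and the renormalization depth chosen so that $l$ is small (with uniformity from bounded type), this gives a contraction. The delicate point is that the normalizing chart $\Psi$ is nonlinear and is itself perturbed by the commutator. So I would control $\Psi$ using the distortion estimate from the sketch of Proposition~\ref{prop-conj-class}, namely $|l\Psi''/\Psi'|\le c\,l$, to show that conjugation by $\Psi$ does not destroy the gain. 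Uniformity over the compact set $\hat\Lambda_K$ then gives uniform hyperbolicity with a one-dimensional unstable bundle. The stable bundle is the sum of the zero-mean periodic directions and the non-commuting directions, and it is invariant because it is the set of vectors that are not expanded.

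\textbf{Step 4 (the foliation).} Since $\cR$ is analytic and $\hat\Lambda_K$ is a compact uniformly hyperbolic set, the stable manifold theorem for hyperbolic sets of analytic maps in Banach spaces applies. Compactness of $D\cR$ ensures the splitting is well behaved. This yields local stable manifolds of codimension one, analytic by the holomorphic dependence on parameters, forming a strong stable foliation. This is the analogue of Theorem~\ref{th-GY-3}.
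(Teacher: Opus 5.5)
Your proposal takes essentially the route the paper indicates. The paper gives no proof of its own: it cites \cite{KAM-yam}, remarks that on normalized commuting pairs $\cR$ is the cylinder renormalization of \cite{GY}, and says the extra almost-commuting directions are contracted because renormalization is ``commutativity improving''. Your final appeal to the stable manifold theorem matches as well.

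Two small corrections are worth making.

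\emph{The chart $\Psi_\beta$.} At points $(\alpha,T_\theta)\in\hat\Lambda_K$ the chart $\Psi_\beta$ is linear. Because the base pair commutes, its variation drops out of the linearized commutator. So $D\cR$ acts on defects exactly by $d\mapsto\pm\theta^{-1}d(\theta\,\cdot)$. An $O(z^3)$ defect therefore gains a factor of order $\theta^2$, or $l^2$ for an iterate rather than the $l$ you wrote. No distortion estimate from Proposition~\ref{prop-conj-class} is needed, and that estimate concerns linearizable maps in any case.

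\emph{The stable bundle.} $D\cR$ is only block-triangular with respect to the invariant subspace $T\cC_W$. Hence the stable bundle is in general a graph over your non-commuting complement, not literally its sum with the zero-mean periodic directions. Hyperbolicity with a one-dimensional unstable direction follows directly from this triangular structure.
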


The space of almost commuting pairs is suitable for extending renormalization to two-dimensional maps. Define $\iota$ an embedding
of one-dimensional maps $h$ into the space of two-dimensional maps by
$$\iota(h):(x,y)\mapsto (h(x),h(x));$$
when applied to pairs, we use it on each of the two maps respectively.

As shown in \cite{KAM-yam}, $\cR$ can be extended as an analytic operator to the space of two-dimensional maps on a neighborhood of the set
$\iota(\hat\Lambda_K).$ The extension is somewhat technical, so we will not recall the details here. We will only note that no new unstable directions appear in the space of two-dimensional maps. The set $\iota(\hat\Lambda_K)$ remains a hyperbolic horseshoe, with a codimension-one stable foliation. This will be the starting point of our proof of Theorem~\ref{th-main-3}.

\subsection{Proof of Theorem~\ref{th-main-3}}
Let $\alpha\in\cB_K$ and let $W=W^s_\text{\rm loc}(\iota(R_\alpha)).$ Suppose, $F\in W$. Applying the same argument as in Proposition~\ref{prop-curve-2} {\it mutatis mutandis}, we see that $F$ possesses an invariant topological circle $\Gamma_F\ni(0,0)$ on which it is conjugate to $R_\alpha$. Consider the two-parameter complex family
$$F_{z,w}\equiv F(\cdot-z,\cdot-w)+(z,w).$$
Similarly to the Version II of the proof of Theorem~\ref{th-main-2}, we see that if $$(z,w)\in\Gamma_F\cap U(0),$$ then $F_{z,w}\in W$. This implies that the intersection of the two-parameter complex family $F_{z,w}$ with the codimension-one complex analytic submanifold $W$ contains a biholomorphically embedded one-dimensional complex disk $D\subset\CC^2$. The set $A=\cup F^j(D)$ is foliated by invariant circles. Since $F$ is dissipative, $A$ is topologically two real dimensional. Hence, $A$ is a conformal annulus. Montel's theorem considerations imply that $A$ is a Herman ring.

\bibliographystyle{amsalpha}
\bibliography{biblio}

\providecommand{\bysame}{\leavevmode\hbox to3em{\hrulefill}\thinspace}
\providecommand{\MR}{\relax\ifhmode\unskip\space\fi MR }
\providecommand{\MRhref}[2]{%
  \href{http://www.ams.org/mathscinet-getitem?mr=#1}{#2}
}
\providecommand{\href}[2]{#2}
\begin{thebibliography}{dMvS93}

\bibitem[Arn61]{Arnold61}
V.I. Arnol'd, \emph{Small denominators. {I}. {M}apping the circle onto itself},
  Izv. {A}kad. {N}auk {S}{S}{S}{R} Ser. {M}at. \textbf{25} (1961), 21--86.

\bibitem[BR]{BR}
L.~Bers and H.~L. Royden, \emph{Holomorphic families of injections}, Acta
  {M}ath. \textbf{157}, 259--286.

\bibitem[dMvS93]{dMvS}
W.~de~Melo and S.~van Strien, \emph{One-dimensional dynamics}, Springer, 1993.

\bibitem[GRY21]{GRY}
D.~Gaidashev, R.~Radu, and M.~Yampolsky, \emph{Renormalization and {S}iegel
  disks for complex {H}\'e non maps}, J. {E}ur. {M}ath. {S}oc. \textbf{23}
  (2021), 1053–1073.

\bibitem[GY20]{GaidYam1}
D.~Gaidashev and M.~Yampolsky, \emph{Renormalization of almost commuting
  pairs}, Invent. math. \textbf{221} (2020), 203–236.

\bibitem[GY22]{GY}
N.~Goncharuk and M.~Yampolsky, \emph{Analytic linearization of conformal maps
  of the annulus}, Advances in Mathematics \textbf{409} (2022).

\bibitem[KS87]{KhSin87}
K.~Khanin and Y.~Sinai, \emph{A new proof of {M}. {H}erman's theorem.},
  Commun.Math. Phys. \textbf{112} (1987), 89–101.

\bibitem[Ris99]{Risler}
E.~Risler, \emph{Lin\'{e}arisation des perturbations holomorphes des rotations
  et applications}, M\'{e}m. Soc. Math. Fr. (N.S.) (1999), no.~77.

\bibitem[Yam21]{KAM-yam}
M.~Yampolsky, \emph{{K}{A}{M}-renormalization and {H}erman rings for 2{D}
  maps}, {C}. {R}. {M}ath. {R}ep. {A}cad. {S}ci. {C}anada \textbf{43} (2021),
  no.~2, 78--86.

\bibitem[Yoc02]{Yoccoz2002}
J.-C. Yoccoz, \emph{Analytic linearization of circle diffeomorphisms},
  Dynamical systems and small divisors ({C}etraro, 1998), Lecture Notes in
  Math., vol. 1784, Springer, Berlin, 2002, pp.~125--173.

\end{thebibliography}

 \end{document}